\documentclass[12pt]{article}
\usepackage{mathrsfs}
\usepackage{graphicx}
\usepackage{amsmath}
\usepackage{amssymb}
\usepackage{amsthm}

\usepackage{thmtools}
\usepackage{amsfonts}

\usepackage[none]{hyphenat}

\usepackage{bbding}
\usepackage{CJK,CJKnumb}
\usepackage{comment}
\usepackage{mathtools}
\usepackage{xcolor}

\usepackage{etoolbox}
\patchcmd{\thebibliography}{\chapter*}{\section*}{}{}
\usepackage{ltablex}

\usepackage[nospace,noadjust]{cite}
\usepackage{calc}
\usepackage{geometry}
\usepackage{cite}
\usepackage{tikz-cd}

\geometry{left=3.0cm,right=3.0cm,top=2.5cm,bottom=2.5cm}

\usepackage{indentfirst}
\usepackage{longtable}

\allowdisplaybreaks

\numberwithin{equation}{section} 

\newtheorem{definition}{Definition}[section]

\newtheorem{theorem}[definition]{Theorem}
\newtheorem{thmx}{Theorem}

\newtheorem{proposition}[definition]{Proposition}
\newtheorem{lemma}[definition]{Lemma}
\newtheorem{conjecture}[definition]{Conjecture}

\newtheorem{example}[definition]{Example}
\newtheorem*{remark}{Remark}

\newcommand{\Q}{\mathbb{Q}}

\newcommand{\airplane}{\mathbb{P}^1(\overline{\mathbb{Q}}) \times \mathbb{P}^1(\overline{\mathbb{Q}})}
\newcommand{\airplaneOne}{\mathbb{P}^1 \times \mathbb{P}^1}

\setcounter{secnumdepth}{3} \setcounter{tocdepth}{1}
\title {Generalized Greatest Common Divisors for Orbits under Rational Functions}
\author{Keping Huang
\thanks{keping.huang@rochester.edu, Department of Mathematics, University of Rochester}
\footnotemark[1]}
\date{\vspace{-5ex}}

\begin{document}
\sloppy
\maketitle

\def\Z{{\bf Z}}

\begin{abstract}
   Assume Vojta's Conjecture (Conjecture \ref{Vojta}). Suppose
   $a, b, \alpha,\beta\in \mathbb{Z}$, and $f(x),g(x)\in\mathbb{Z}[x]$ are polynomials of degree $d\ge 2$.
   Assume that the sequence $(f^{\circ n}(a), g^{\circ n}(b))_n$ is generic and $\alpha,\beta$ are not exceptional for $f,g$ respectively. 
   We prove that
   for each given $\varepsilon >0$, there exists 
   a constant $C = C(\varepsilon,a,b,\alpha,\beta,f,g)>0 $,
   such that
   for all $n\ge 1$, we have
   $$\gcd(f^{\circ n}(a)-\alpha, g^{\circ n}(b) -\beta) \le C\cdot\exp({\varepsilon\cdot d^n}). $$
We prove an estimate for rational functions and for a more general gcd and then obtain the above inequality as a consequence.
\end{abstract}

\section{Introduction}

In \cite{BCZ03}, Bugeaud, Corvaja, and Zannier proved the following theorem.

\begin{theorem}
Let $a,b$ be multiplicatively independent integers $\ge 2$, and let
$\varepsilon > 0$. Then, provided $n$ is sufficiently large, we have
$$\gcd(a^ n - 1,b ^n - 1) < \exp(\varepsilon n).$$
\end{theorem}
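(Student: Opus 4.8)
The plan is to prove this following Silverman's blow-up argument: reinterpret $\log\gcd(a^n-1,b^n-1)$ as a height attached to an exceptional divisor on a rational surface, and bound that height via Vojta's conjecture. Let $X$ be the blow-up of $\mathbb{P}^1\times\mathbb{P}^1$ at the point $(1,1)$, with exceptional curve $E$ and blow-down $\pi\colon X\to\mathbb{P}^1\times\mathbb{P}^1$, and for $n\ge1$ let $P_n\in X(\mathbb{Q})$ be the point over $(a^n,b^n)$ (well defined since $a,b\ge2$ forces $(a^n,b^n)\ne(1,1)$). The first step is the identity
\[
h_E(P_n)=\log\gcd(a^n-1,\,b^n-1)+O(1),
\]
checked place by place: at a finite place $v=v_p$ the local Weil function is $\lambda_{E,v}(P_n)=\min\!\big(-\log\|a^n-1\|_v,\,-\log\|b^n-1\|_v\big)+O(1)$ (the exceptional divisor over $(1,1)$ records simultaneous $p$-adic nearness of $a^n$ and $b^n$ to $1$), whose sum over $p$ is exactly $\log\gcd(a^n-1,b^n-1)$; and at the archimedean place $P_n$ runs off to infinity in $\mathbb{A}^2$, hence stays a bounded distance from $E$, so $\lambda_{E,v}(P_n)=O(1)$ there.

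Next I would apply Vojta's conjecture on the surface $X$ with boundary $D$ taken to be the (strict $=$ total) transform of the four coordinate lines $\{0\}\times\mathbb{P}^1,\{\infty\}\times\mathbb{P}^1,\mathbb{P}^1\times\{0\},\mathbb{P}^1\times\{\infty\}$; since $(1,1)$ lies on none of them, $D$ is a simple normal crossings divisor on $X$, and because the toric boundary of $\mathbb{G}_m^2$ is anticanonical one gets the key linear equivalence $K_X+D\sim E$. Choose $S=\{v\mid\infty\}\cup\{v_p:p\mid ab\}$. Then the $P_n$ are $S$-integral with respect to $D$: for a finite $v\notin S$ the coordinates $a^n,b^n$ are $v$-adic units, so $P_n$ is bounded away from $D$ at $v$, whence $N_{D,S}(P_n):=\sum_{v\notin S}\lambda_{D,v}(P_n)=O(1)$. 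Writing $h_{K_X}=h_{K_X+D}-h_D+O(1)=h_E-h_D+O(1)$ and $h_D=m_{D,S}+N_{D,S}+O(1)$, Vojta's inequality $m_{D,S}(P)+h_{K_X}(P)\le\varepsilon\,h_A(P)+O(1)$ (valid for $P$ outside a proper Zariski-closed $Z=Z(\varepsilon)$, with $A$ ample) rearranges to $h_E(P_n)\le N_{D,S}(P_n)+\varepsilon\,h_A(P_n)+O(1)\le\varepsilon\,h_A(P_n)+O(1)$. Since $h_A(P_n)\le h_{\pi^*(H_1+H_2)}(P_n)+O(1)=h(a^n)+h(b^n)+O(1)=(\log a+\log b)\,n+O(1)$, combining with the first step gives $\log\gcd(a^n-1,b^n-1)\le\varepsilon(\log a+\log b)\,n+O(1)$ whenever $P_n\notin Z$; rescaling $\varepsilon$ and taking $n$ large absorbs the constant and yields $\gcd(a^n-1,b^n-1)<\exp(\varepsilon n)$.

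It remains to discard Vojta's exceptional set. Since $Z$ is a proper closed subset of a surface, its positive-dimensional part is a finite union of curves, so it suffices to know that no curve in $\mathbb{P}^1\times\mathbb{P}^1$ contains infinitely many of the points $(a^n,b^n)$. A curve in $\mathbb{G}_m^2$ meeting the cyclic group $\langle(a,b)\rangle$ in an infinite set must be a coset of a subtorus, i.e. cut out by a monomial $x^iy^j=c$; evaluating at two distinct exponents forces $a^ib^j$ to be a root of unity, hence $a^ib^j=1$ with $(i,j)\ne(0,0)$, contradicting the multiplicative independence of $a$ and $b$. (This is exactly where that hypothesis enters.) As the $P_n$ are pairwise distinct, only finitely many of them can lie in $Z$, so the conclusion of the previous paragraph holds for all sufficiently large $n$.

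The main obstacle here is not a single hard estimate but the bookkeeping in the second step — in particular choosing $S$ to absorb the primes dividing $ab$ (at those places $P_n$ really is $v$-adically close to a coordinate axis, contributing $\asymp n$ to the naive non-archimedean contribution, and this must be quarantined into $m_{D,S}$ rather than $N_{D,S}$) and correctly controlling the sign of the archimedean local heights for $E$ — together with the fact that the argument is conditional on Vojta's conjecture. I would also record that the theorem is in fact unconditional, being the original result of Bugeaud, Corvaja and Zannier, proved in \cite{BCZ03} by a completely different route via the Subspace Theorem applied to an auxiliary family of linear forms built from the (varying with $n$) prime divisors of $\gcd(a^n-1,b^n-1)$, the control of that varying set of primes being the crux there.
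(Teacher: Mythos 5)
The paper does not actually prove this theorem: it is quoted verbatim from Bugeaud--Corvaja--Zannier \cite{BCZ03}, and the paper only sketches the original strategy, which is unconditional and runs through the Schmidt Subspace Theorem (build a family of linear forms from $n,a,b$, show that failure of the bound forces those vectors into a proper linear subspace, and extract a multiplicative dependence between $a$ and $b$). Your proposal takes a genuinely different route, namely the Silverman/Vojta blow-up argument \cite{Sil05}, and that route is precisely the engine of the \emph{rest} of this paper (Theorems \ref{HGCD}, \ref{GCD}). The computation you give is sound: blowing up $\mathbb{P}^1\times\mathbb{P}^1$ at $(1,1)$, taking $D$ the total transform of the toric boundary, observing $K_X+D\sim E$ (which is the same bookkeeping as in Lemma \ref{AG} and the line $K_{\tilde X}=\pi^*K_X+\tilde Y$ of Section \ref{AGSection}), identifying $h_E(P_n)$ with $\log\gcd(a^n-1,b^n-1)+O(1)$, choosing $S$ to contain the primes dividing $ab$ so that $N_{D,S}(P_n)=O(1)$, and feeding this into Vojta's inequality to get $h_E(P_n)\le\varepsilon\,h_A(P_n)+O(1)$ off the exceptional set. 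Two remarks are worth making. First, this route is \emph{conditional} on Vojta's Conjecture, whereas the theorem as stated is unconditional; you flag this yourself, but it means the proposal establishes a strictly weaker statement than the theorem it targets, and one should not present it as a proof of the BCZ theorem without that caveat up front. Second, the treatment of Vojta's exceptional set deserves a name: the claim that a curve meeting the cyclic group $\langle(a,b)\rangle$ in an infinite set must be a coset of a subtorus is Laurent's theorem (Mordell--Lang for $\mathbb{G}_m^2$), or alternatively can be extracted from Skolem--Mahler--Lech applied to the exponential polynomial $F(a^n,b^n)$; either way it is a nontrivial input and should be cited, not asserted. Comparing the two approaches: the Subspace-Theorem argument is unconditional and gives the sharp statement, at the cost of a delicate management of the $n$-dependent set of primes dividing $\gcd(a^n-1,b^n-1)$; the Vojta/blow-up route is conceptually cleaner and scales to the dynamical settings of this paper, where no Subspace-Theorem analogue is available, but at present it only yields conditional conclusions.
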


The authors of that paper obtained the result by contradiction. They began by constructing a family of vectors in terms of $n,a$, and $b$.
Then they showed that if the bound is not satisfied, then the vectors must lie in a lower-dimensional linear subspace by the
Schmidt Subspace Theorem. Using this result they are able to derive algebraic relations on powers of $a$ and $b$,
which guarantee that $a,b$ are multiplicatively dependent.

One may ask whether a similar inequality holds for iterations of polynomials,
as iterations are dynamical analogues of power maps.
It seems that current tools are not powerful enough to tackle this problem.
In \cite{Sil87} Silverman observed that one can interpret the greatest common divisor as
a height function on some blowup of the projective plane.
Furthermore, assuming Vojta's Conjecture (cf. \cite{Voj87}),
Silverman gave in \cite{Sil05} reasonably strong upper bounds for the greatest common divisor of the values of some polynomial functions,
in terms of the absolute values of the initial points.
See also \cite{PW16} for an application of Silverman's method to $\gcd$ bounds of analytic functions.
Many other authors have worked out various generalization and
variations of this problem, both over number fields and function
fields (see \cite{AR04}, \cite{CZ05}, \cite{CZ08}, \cite{CZ13} and \cite{Sil04} for example).

In this paper, we apply Silverman's method in the situation of iterations.
In fact, we will prove a Silverman-type estimate for a fixed smaller iteration, and derive some results on gcd's.
However, there are some technical difficulties.
First, in order to have the required operands of the greatest common divisor, one needs to blow up a proper Zariski closed subset in general
(as opposed to subvarieties in \cite{Sil05}), depending on the prescribed constant $\varepsilon$.
Second, in the case of the rational functions the numerators of iterates might not be iterates of any polynomial,
so we need a more detailed analysis.
We also need to control the degree of ramification,
for this we also need the reasonable assumption that $\alpha,\beta$ are not exceptional.

Let $X$ be an algebraic variety defined over $\overline{\mathbb{Q}}$.

\begin{definition}
We say that a sequence $(x_n)_n\subseteq X$ is {\emph{generic}} in $X$ if
for any proper Zariksi closed subset $Y\subsetneq X$, there exists an $N\in \mathbb{N}$
such that for all $n\ge N$, $x_n\notin Y$.
A point $x_0\in \overline{\mathbb{Q}}$ is said to be {\emph{exceptional}} for a rational function $\phi\in \overline{\mathbb{Q}}(x)$
if the backward orbit $\cup_{n = 0}^{\infty} \phi^{-n}(\{x_0\})$ is finite.
\end{definition}

A main result of this paper is the following theorem.

\begin{thmx}\label{GCD}
   Assume Vojta's Conjecture (Conjecture \ref{Vojta}). Suppose
   $a, b, \alpha,\beta\in \mathbb{Z}$, and that $f(x),g(x)\in\mathbb{Z}[x]$ are polynomials of degrees $d\ge 2$.
   Assume that $\alpha,\beta$ are not exceptional for $f,g$ respectively.
   Assume that the sequence $(f^{\circ n}(a), g^{\circ n}(b))_n$ is generic in $\overline{\mathbb{Q}}^2$.
   Then for each given $\varepsilon >0$, there exists 
   a constant $C = C(\varepsilon,a,b,\alpha,\beta,f,g)>0 $,
   such that
   for all $n\ge 1$, we have
   $$\gcd(f^{\circ n}(a)-\alpha, g^{\circ n}(b) -\beta) \le C\cdot\exp({\varepsilon\cdot d^n}). $$
\end{thmx}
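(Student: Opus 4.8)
The plan is to realize $\log\gcd$ as the height of an exceptional divisor on a blowup and then invoke Vojta's Conjecture, in the spirit of Silverman \cite{Sil87, Sil05}, but with the iteration forcing two modifications: peel off a fixed number $k=k(\varepsilon)$ of iterates before passing to geometry, and blow up an $\varepsilon$-dependent, in general non‑reduced and reducible, zero‑dimensional subscheme.

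First, the reductions. Genericity of $\big(f^{\circ n}(a),g^{\circ n}(b)\big)_n$ in $\overline{\mathbb{Q}}^2$ forces $a$ to be non‑preperiodic for $f$ and $b$ non‑preperiodic for $g$ (a preperiodic $a$ would confine the sequence to finitely many vertical lines), so $\hat h_f(a),\hat h_g(b)>0$ and $h(f^{\circ n}(a))+h(g^{\circ n}(b))=Hd^n+O(1)$ with $H=\hat h_f(a)+\hat h_g(b)>0$. Since the iterates are rational integers, the archimedean places contribute only $O(1)$ to every local‑height sum in play, so it suffices to bound a generalized gcd over the number field $K$ generated by the roots that will appear; and since Vojta's inequality holds off a proper Zariski‑closed set that a generic sequence eventually avoids, it is enough to prove the estimate for $n\ge N(\varepsilon)$, folding the finitely many small $n$ into $C$. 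Now fix $\varepsilon>0$ and a large integer $k$ (to be pinned down last). Put $R=f^{\circ k}-\alpha$, $S=g^{\circ k}-\beta$ in $\mathbb{Z}[x]$ and $m=n-k$, so that $f^{\circ n}(a)-\alpha=R(u_m)$ and $g^{\circ n}(b)-\beta=S(v_m)$ with $u_m=f^{\circ m}(a)$, $v_m=g^{\circ m}(b)$; thus I must bound $\gcd\big(R(u_m),S(v_m)\big)$.

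Second, the geometry. Work inside $\mathbb{P}^1\times\mathbb{P}^1$ with the boundary $\{x=\infty\}\cup\{y=\infty\}$, and let $Z\subset\mathbb{A}^2$ be the subscheme defined by $(R(x),S(y))$ — zero‑dimensional, supported at the points $(\alpha_i,\beta_j)$ ($\alpha_i,\beta_j$ the roots of $R,S$) and fat along multiple roots. Let $\pi\colon W\to\mathbb{P}^1\times\mathbb{P}^1$ be a resolution of the blowup of $Z$, with exceptional divisor $\mathcal E=\sum_\ell a_\ell E_\ell$ and boundary $D$ the strict transforms of $\{x=\infty\}$ and $\{y=\infty\}$, a simple normal crossings divisor. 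Because the morphism $(F,G)=(f^{\circ k}-\alpha,\,g^{\circ k}-\beta)$ of $\mathbb{P}^1\times\mathbb{P}^1$ satisfies $(F,G)^*(\text{ideal of }(0,0))=(R(x),S(y))=\text{ideal of }Z$, it lifts to a morphism $W\to\mathrm{Bl}_{(0,0)}(\mathbb{P}^1\times\mathbb{P}^1)$ carrying $\mathcal E$ to the pullback of the exceptional divisor over the origin; pulling back that divisor's Weil function gives, uniformly in $m\ge N(\varepsilon)$,
$$\log\gcd\big(R(u_m),S(v_m)\big)=h_{\mathcal E}\big(\widetilde{Q_m}\big)+O(1),$$
where $\widetilde{Q_m}\in W$ is the (well‑defined, by genericity) lift of $Q_m=(u_m,v_m)$, which for $m$ large lies outside Vojta's exceptional set; the archimedean local heights of $\mathcal E$ at $\widetilde{Q_m}$ are $O(1)$ because $\widetilde{Q_m}$ drifts toward $D$, which is disjoint from $\mathcal E$. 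Writing $K_W=\pi^*\big(-2\{x=\infty\}-2\{y=\infty\}\big)+\sum_\ell b_\ell E_\ell$, and noting that $\widetilde{Q_m}$ is a $D$‑integral point so $m_{S,D}(\widetilde{Q_m})=h(u_m)+h(v_m)+O(1)=Hd^m+O(1)$ while $h_A(\widetilde{Q_m})\le c_A d^m+O(1)$ for a suitable constant $c_A$ and any fixed ample $A$, Vojta's inequality $m_{S,D}(P)+h_{K_W}(P)\le\varepsilon_1 h_A(P)+O(1)$ (valid off a proper closed set) yields, for $m\ge N(\varepsilon)$,
$$\sum_\ell b_\ell\, h_{E_\ell}\big(\widetilde{Q_m}\big)\ \le\ Hd^m+\varepsilon_1 c_A d^m+O(1),$$
and hence, since each $h_{E_\ell}(\widetilde{Q_m})$ is bounded below, a bound $h_{\mathcal E}(\widetilde{Q_m})\le\rho\,(H+\varepsilon_1 c_A)\,d^m+O(1)$ with $\rho$ governing the ratios $a_\ell/b_\ell$.

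The main obstacle is exactly the control of $\rho$, and this is where non‑exceptionality and the $\varepsilon$‑dependent subscheme enter. If one blows up the honest scheme $Z=V(R,S)$, its fat points create Du~Val ($A$‑type) singularities whose resolutions contribute exceptional curves of discrepancy zero but positive multiplicity in $\mathcal E$, so $\max_\ell a_\ell/b_\ell=\infty$. The remedy I would use is to replace $R,S$ by their radicals $\widetilde R,\widetilde S$: then $Z$ is reduced, $W$ is smooth, every $E_\ell$ is a $(-1)$‑curve with $a_\ell=b_\ell=1$, so $\rho=1$ and $h_{\mathcal E}(\widetilde{Q_m})\le(H+\varepsilon_1 c_A)d^m+O(1)$; the price is the passage $R\rightsquigarrow\widetilde R$. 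Comparing valuations at the finitely many (and $k$‑dependent) good primes gives $v_p\big(\gcd(R(u_m),S(v_m))\big)\le E_k\cdot v_p\big(\gcd(\widetilde R(u_m),\widetilde S(v_m))\big)$, where $E_k$ is the largest multiplicity of a root of $R$ or of $S$; and the hypothesis that $\alpha,\beta$ are not exceptional is precisely what makes $E_k$ grow strictly more slowly than $d^k$: analysing how the (at most $d-1$) critical points of $f$ meet the preimage tree of $\alpha$, together with Riemann--Hurwitz, shows $E_k\le C^k$ for some $C<d$ depending only on $f$ (the totally ramified monomial/Chebyshev configuration, the only one producing faster growth, is excluded because there $\alpha$ would be exceptional), and likewise for $g,\beta$. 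The finitely many bad primes must be handled separately, again using non‑exceptionality to bound their local contributions. Combining everything,
$$\log\gcd\big(f^{\circ n}(a)-\alpha,\,g^{\circ n}(b)-\beta\big)\ \le\ E_k\,(H+\varepsilon_1 c_A)\,d^{\,n-k}+O(1)\ \le\ \big(C/d\big)^{k}(H+\varepsilon_1 c_A)\,d^{\,n}+O(1),$$
so choosing $k=k(\varepsilon)$ with $\big(C/d\big)^{k}(H+\varepsilon_1 c_A)<\varepsilon$ and absorbing the $O(1)$ and the finitely many exceptional $n$ into $C(\varepsilon,a,b,\alpha,\beta,f,g)$ finishes the proof. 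The hard part throughout is the geometric bookkeeping of the previous paragraph: establishing the $\log\gcd$‑to‑$h_{\mathcal E}$ identity uniformly in $m$, pinning down the multiplicities $a_\ell,b_\ell$ after resolution, and extracting from ``not exceptional'' a clean effective bound on the ramification of $f^{\circ k}$ over $\alpha$ (likewise $g^{\circ k}$ over $\beta$), bad primes included.
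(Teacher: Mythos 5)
Your proposal follows essentially the same route as the paper's: peel off a fixed number of iterates ($k$, the paper's $D$), pass to the radicals of $f^{\circ k}-\alpha$ and $g^{\circ k}-\beta$, identify $\log\gcd$ of the radical values with a height on the blowup of $\mathbb{P}^1\times\mathbb{P}^1$ along the resulting reduced zero-dimensional scheme, apply Vojta's Conjecture there (after checking ampleness of the twisted divisor, which the paper does via Nakai--Moishezon in Lemma 3.4), compare back to the honest $\gcd$ at the cost of a multiplicity factor ($E_k$, the paper's $M'$), and use non-exceptionality of $\alpha,\beta$ to make $E_k/d^k$ small (the paper cites the proof of Lemma 3.52 of \cite{Sil07}; your cleaner-looking bound $E_k\le C^k$ with $C<d$ is stronger than needed and not justified beyond the $o(d^k)$ decay actually used). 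The paper actually proves the more general Theorem~\ref{HGCD} for rational functions over number fields and derives Theorem~\ref{GCD} as a corollary; this is why it bothers with the M\"obius-transformation reduction to $\alpha=\beta=0$ and with Theorem~E of \cite{Sil93} to control the finitely many places in $S$ and the archimedean places, precisely the ``bad primes'' step you flag but do not carry out. Since you restrict from the start to $\mathbb{Z}[x]$, the archimedean places genuinely contribute $O(1)$, but the bad finite primes still require the $p$-adic non-approximation estimate from \cite{Sil93} (or an equivalent), and that is the one substantive piece left open in your sketch.
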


\begin{remark}
Let $d_1 = \deg(f), d_2 = \deg(g)$. 
   The result is trivial when $d_1 \neq d_2$ and $d = \max(d_1,d_2)$, and is proved in \cite{CZ05} for the case $d_1 = d_2 =1$.
   We use the convention that $\gcd(0,0) = 0$. But this involves only finitely many $n$, since the sequence $(f^{\circ n}(a), g^{\circ n}(b))_n$ is generic,
   and hence so is $(f^{\circ n}(a)-\alpha, g^{\circ n}(b) -\beta)_n$.
\end{remark}

   In \cite{Xie15} Xie proved the Dynamical Mordell-Lang Conjecture for polynomial endomorphisms of the affine plane. Therefore the genericity of the sequence $(f^{\circ n}(a), g^{\circ n}(b))_n$ is equivalent to the Zariski density of
   $(f^{\circ n}(a), g^{\circ n}(b))_n$.
   On the other hand, Medvedev and Scanlon gave in \cite{MS09} characterizations of periodic curves under split polynomial endomorphisms of $ \airplaneOne$.
   The equation of the curve should meet certain commutativity conditions, which are unlikely to hold in general.
Therefore the genericity condition of the sequence $(f^{\circ n}(a), g^{\circ n}(b))_n$ is a mild condition.

Actually we will prove a generalization of Theorem \ref{GCD} and obtain Theorem \ref{GCD} as a consequence.
In \cite{Sil05} Silverman defined a more general gcd height which is the log of gcd in the case of rational integers.
In the same paper he proved most results in this more general framework.
See section 2 for the precise definitions and statements.

The plan of this paper is as follows. Section 2 contains a table of notations, basics of height functions and algebraic geometry, a statement of Vojta's Conjecture,
some results concerning the gcd height,
and statements of other main theorems of this paper.
We prove our main theorem concerning the gcd height in Section 3.
In Section 4, we first cite a genericity criterion for the case when $f = g$ are non-special polynomials, replacing the genericity condition.
We also cite a theorem of Corvaja and Zannier for the case of power maps.
At the end of Section \ref{DML} we give several examples to explain why the genericity condition in Theorem \ref{GCD} is necessary;
our policy is to include only results which are easy to state and hopefully clarify things greatly.
In Section 5, we give a conditional result for characterizing large gcd's.

\section{Preliminaries}

We use the following notations throughout this paper.
\begin{longtable}{p{3cm}p{12cm}}
$K$&  a number field. \\
$M(K), M(K)_{\mathrm{fin}}$& the set of places of $K$; the set of finite places of $K$. \\
$n_v$& the local degree $[K_v: \mathbb{Q}_w]$ where $w$ is the contraction of $v$ on $\mathbb{Q}$; \\
&the product formula has power $n_v$ for the place $v$. \\
$f,g$& rational functions defined over $K$. \\
$d$ &  the degree of $f$ and $g$. \\
$h$& a Weil height on $K$. \\
$\hat{h}_f$& the canonical height with respect to $f$. \\
$f^{\circ n}$& the $n$-th iterate of $f$. \\
$|\cdot|_v$& the $v$-adic absolute value. \\
$v^+(\cdot)$ &$\max(0, -\log|\cdot|_v)$. 
\end{longtable}


For $P =[x_0, \dots, x_n] \in \mathbb{P}^n(K)$, define the logarithmic height
$$h_{\mathbb{P}^n}(P) = \frac{1}{[K:\mathbb{Q}]} \sum_{v\in M(K)} n_v\max\left(\log|x_0|_v, \dots, \log|x_n|_v\right) . $$

Suppose $f: \mathbb{P}^n \rightarrow \mathbb{P}^n$ is an endomorphism of degree $d\ge 2$. 
Then following a construction of Tate, Call and Silverman defined in \cite{CS93}
the canonical height $h_f$ associated with $f$ as 
$$h_f(P) = \lim_{n\rightarrow \infty} \frac{h\left(f^{\circ n}(P)\right)}{d^n}.$$

The canonical height satisfies the following properties: 
\begin{itemize}
 \item $\hat{h}_f(P) = h_{\mathbb{P}^n}(P) + O(1)$, 
   \item $\hat{h}_f(P) = d \cdot \hat{h}_f(P)$. 
\end{itemize}
See also Section 3.3 of \cite{Sil07} for more details. 

Now we introduce some notions in algebraic geometry.  
For more information one may refer to \cite{Har77}. 

\begin{definition}
 Let $R = \bar{K}[X_0, \dots, X_n]$ and let $T \subseteq R$ be a set of homogeneous polynomials in $X_0, \dots, X_n$. Every set 
$$Z(T) := \{ P \in \mathbb{P}^n(\bar{K})~|~f(P) = 0 ~\text{for all}~f\in T\}$$
is called a \emph{Zariski closed} subset of $\mathbb{P}^n(\bar{K})$. 
A Zariski closed subset $V\subseteq \mathbb{P}^n(\bar{K})$ is called a \emph{projective variety} if it cannot be written as a union of two Zariski closed proper subsets. 
\end{definition}

To give more general definition of height functions, we need the notion of divisors on nonsingular varieties. See Sections 1.5 and 2.6 of \cite{Har77} for more details. 

\begin{definition}
Let $X$ be a nonsingular projective variety. The group of \emph{Weil divisors} on $X$ is the free abelian group generated by the closed subvarieties of codimension one on $X$. It is denoted by $\mathrm{Div}(X)$. Denote by $K(X)^*$ the multiplicative group of nonzero rational functions on $X$. Each rational function $f\in K(X)^*$ 
gives a \emph{principal divisor} 
$$\mathrm{div}(f) = \sum_{Y\subsetneq X~\mathrm{codimension~1}} \mathrm{ord}_Y(f) \cdot Y.$$
The group $\mathrm{Div}(X)$ divided by the subgroup of principal divisors is called the \emph{divisor class group} of $X$. 
\end{definition}

\begin{remark}
In the case when $X$ is nonsingular, 
the class group is isomorphic to the group $\mathrm{Pic}(X)$. 
For the definition of the latter, see Section 2.6 of \cite{Har77}. 
\end{remark}

\begin{definition}
Suppose $D \in \mathrm{Div}(X)$. The \emph{complete linear system} of $D$ is the set 
$$L(D) = \{f\in K(X)^*~|~D + \mathrm{div}(f) \ge 0\} \cup \{0\}.$$
If $L(D) \neq 0$, then $L(D)$ induces a rational morphism 
$\phi_D: X \dashrightarrow \mathbb{P}^n$. 
For more details, refer to Section A.3 of \cite{HS00}. 
\end{definition}

\begin{definition}
A divisor $D\in \mathrm{Div}(X)$ is said to be \emph{very ample }if the above map $\phi_D$ is an embedding. $D$ is said to be \emph{ample} if an integral multiple $nD$ is very ample. 
\end{definition}

Fix a nonsingular variety $X$ defined over $K$. For each divisor $D\in \mathrm{Div}(X)$ defined over $K$ we can define height functions $h_{X, D}: X(\bar{K}) \rightarrow \mathbb{R}$ as below. 
For more details, including the well-definedness of those height functions, refer to  \cite{HS00}, Theorem B.3.2. 

\begin{itemize}
\item If $D$ is very ample, choose an embedding $\phi_D: X\rightarrow \mathbb{P}^n$. Then define $h_{X,D}(x) = h_{\mathbb{P}^n}(\phi_D(x))$. 
\item If $D$ is ample, then suppose $nD$ is very ample, define $h_{D} = 1/n\cdot h_{nD}$. 
\item  In general, write $D = D_1 - D_2$ with $D_1, D_2$ ample, and define $h_{X, D} = h_{X, D_1} - h_{X, D_2}$. 
\end{itemize}

The following theorem is one of the most important results in Diophantine geometry. 
See also Sections 2.3 and 2.4 of \cite{BG06} and Chapter 4 of \cite{Lang83}.

\begin{theorem}[The Weil Height Machine,  Part of \cite{HS00},Theorem B.3.2]\label{Weil}
In the context of the above paragraphs, the height functions constructed in this way, are determined, up to $O(1)$. They satisfy the following properties. 
\begin{itemize}
\item Let $\phi: X\rightarrow W$ be a morphism and let $D\in \mathrm{Div}(W)$. Then 
$$ h_{X, \phi^* D}(P) = h_{W,D}\left(\phi(P)\right) + O(1)$$
for all $P\in V(\bar{K})$. 
\item Let $D, E\in \mathrm{Div}(X)$. Then 
$h_{X, D+E}  = h_{X, D} + h_{X, E} + O(1)$. 
\item (Northcott's Theorem) Let $D\in \mathrm{Div}(X)$ be ample. 
Then for every finite extension $K'/K$ and every constant $B$, the set 
$$ \{P\in X(K')~|~h_{X, D}(P) \le B  \} $$
is finite. 
\item 
Let $D, E\in \mathrm{Div}(X)$ with $D = E + \mathrm{div}(f)$. 
Then $$h_{X,D}(P)  = h_{X,E}(P) + O(1)$$ for all
$P\in X(\bar{K})$. 
\end{itemize}

\end{theorem}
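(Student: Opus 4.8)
This is the Weil height machine of \cite{HS00}, Theorem B.3.2, so strictly speaking its proof is a citation; but the plan is to indicate the standard construction, since the whole thing is driven by a single computation on projective space. The base case I would establish first is the lemma: if $\psi\colon\mathbb{P}^n\to\mathbb{P}^m$ is a morphism given by homogeneous forms of common degree $e$ with no common zero on $\mathbb{P}^n(\bar K)$, then $h_{\mathbb{P}^m}(\psi(P)) = e\,h_{\mathbb{P}^n}(P) + O(1)$. The upper bound here is just the $v$-adic triangle inequality applied place by place; for the lower bound one uses the Nullstellensatz to write each $X_i^N$ as a polynomial combination of the forms defining $\psi$, again estimated place by place.

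With that in hand I would define $h_{X,D}$ in the three stages of the paragraph preceding the theorem: for very ample $D$ via a chosen embedding $\phi_D$; for ample $D$ as $\tfrac1n h_{X,nD}$ where $nD$ is very ample; and in general as $h_{X,D_1}-h_{X,D_2}$ after writing $D=D_1-D_2$ with $D_i$ ample (add a large multiple of a fixed ample divisor). Independence of all these choices, up to $O(1)$, then follows from the base lemma together with additivity for very ample divisors: two embeddings attached to the same very ample $D$ differ by a linear change of coordinates on the ambient space (possibly after a Veronese); $h_{X,mD}$ and $h_{X,nD}$ are compared through $h_{X,mnD}$; and $D_1-D_2=D_1'-D_2'$ forces the identity $D_1+D_2'=D_1'+D_2$ between ample divisors.

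The four listed properties then come out as follows. Additivity for very ample $D,E$ is the Segre embedding: $\phi_{D+E}$ factors as the Segre map composed with $\phi_D\times\phi_E$, and $h$ is exactly additive under Segre; linearity of the construction extends this to all divisors. Functoriality reduces, by writing $D=D_1-D_2$ with $D_i$ very ample on $W$, to the observation that $\phi_{D_i}\circ\phi$ corresponds to a (possibly incomplete) linear subsystem of $|\phi^*D_i|$, so the base lemma applies. Northcott for ample $D$ follows by embedding via $nD$ and invoking the classical finiteness of points of bounded height and bounded degree in $\mathbb{P}^m$. Finally, if $D=E+\mathrm{div}(f)$ with $D$ (hence $E$) very ample, multiplication by $f$ is an isomorphism $L(D)\to L(E)$ inducing literally the same morphism to projective space, so $h_{X,D}=h_{X,E}$ on the nose for compatible bases, and the general case follows by linearity again.

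The only genuine labor is the bookkeeping of well-definedness — tracking the height through the choice of basis of $L(D)$, of the integer making $nD$ very ample, and of the decomposition $D=D_1-D_2$ — and all of it rests on the base lemma and Segre additivity, with everything else being formal. For complete details I refer to \cite{HS00}, Theorem B.3.2, and Chapter 4 of \cite{Lang83}.
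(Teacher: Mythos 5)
The paper offers no proof of this theorem: it is cited verbatim as part of Theorem~B.3.2 of \cite{HS00}, so there is nothing internal to compare against. Your sketch is a correct high-level outline of the standard argument from that reference (base lemma via triangle inequality and the Nullstellensatz, Segre additivity, reduction to very ample divisors by adding an ample one, linearity), and you appropriately defer the bookkeeping to \cite{HS00} and \cite{Lang83}.
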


We will use the following version of Vojta's Conjecture. It is Conjecture 3.4.3 of the monograph \cite{Voj87}.
For the definition of normal crossing divisor, see Chapter 5, Remark 3.8.1 of \cite{Har77}. 

\begin{conjecture}[Vojta]\label{Vojta}
   Let $K$ be a number field, and let $X$ be a nonsingular projective variety defined over $K$.
   Suppose $A$ is an ample normal crossing divisor on $X$ and $K_X$ is the canonical divisor of $X$, both defined over $K$.
   Let $h_A$ and $h_{K_X}$ be the corresponding height functions respectively.
   For each fixed $\varepsilon >0$, there is a Zariski closed proper subset $V$ of $X$
   and a constant $C$ such that
   $$  h_{K_X}(x) \le \varepsilon \cdot h_A(x)+ C$$
   for all $x\in X(K)\setminus V(K)$.
\end{conjecture}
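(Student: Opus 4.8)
The statement to be established here is Vojta's Conjecture itself, one of the deepest open problems in Diophantine geometry; accordingly, what follows is not a route to a complete proof but a description of the machinery by which the known special cases are obtained, and of exactly where the general argument breaks down. This is why the conjecture is taken as a standing hypothesis in the remainder of the paper.

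The plan for the accessible cases is to run the Diophantine-approximation scheme introduced by Vojta in his reworking of Faltings' proof of the Mordell conjecture and subsequently refined by Bombieri, Faltings, and Evertse--Ferretti. First I would normalize the data: by blowing up $X$ along the singular locus of $\mathrm{Supp}(A)$ and passing to a suitable finite cover, one may arrange that the components of $A$ meet in general position and that a large multiple of $K_X + A$ has global sections, so that the height inequality $h_{K_X}(x) \le \varepsilon \cdot h_A(x) + C$ away from $V$ is converted into a statement about $S$-integral points, or about $K$-rational points of $X \setminus \mathrm{Supp}(A)$ that approximate the components of $A$ too closely. In dimension one, with $X = \mathbb{P}^1$ and $A$ a sum of at least three points, this is exactly the content of the Thue--Siegel--Roth theorem, and for more points one invokes Schmidt's Subspace Theorem; in this case the conjecture is essentially equivalent to the $abc$ conjecture, so already here there is no unconditional proof.

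For higher-dimensional $X$ I would separate out the ranges where a proof is known. If $X$ embeds into a semiabelian variety, the conjecture follows from the theorems of Faltings and Vojta on rational and integral points on subvarieties of abelian and semiabelian varieties; more generally, it holds when $X$ is of general type and carries a suitable map to an abelian variety. The engine in all of these is the same: by an arithmetic Riemann--Roch or Minkowski-style counting argument one produces an auxiliary global section of a carefully chosen line bundle on a high self-power $X^N$, with small height and controlled local contributions at the archimedean and bad places; a hypothetical counterexample---an infinite sequence of points with $h_{K_X}$ large relative to $h_A$---is then shown to force this section to vanish to high order at the associated point of $X^N$; and a Dyson-type lemma (Esnault--Viehweg) together with Faltings' Product Theorem then forces the offending points onto a proper subvariety.

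The main obstacle is precisely that this scheme does not close up for a general $X$. The Product Theorem only delivers a proper subvariety containing the bad points; to finish one must understand the arithmetic and geometry of the intermediate-dimensional subvarieties of $X$ and iterate---a Lang--Vojta type structure theory---and no such input is available for an arbitrary nonsingular projective variety. So the honest content of a ``proof'' here is: establish the conjecture directly in the special cases above and reduce the general case, via the blow-up and covering normalizations, to those; but the passage from ``the points are trapped on a proper subvariety'' to ``the points are bounded on all of $X$'' is exactly the gap that keeps Vojta's Conjecture open, and is why it must be assumed rather than proved.
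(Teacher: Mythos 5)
This statement is Vojta's Conjecture itself (Conjecture 3.4.3 of \cite{Voj87}); the paper does not prove it and could not be expected to --- it is taken as a standing hypothesis for Theorems \ref{GCD}, \ref{HGCD}, and \ref{BD}. You have correctly recognized this, and your survey of the known special cases (Roth/Schmidt in dimension one, Faltings--Vojta for subvarieties of semiabelian varieties, the Product Theorem machinery and where it fails to close) is accurate as far as it goes. There is no gap to flag, because no proof exists to compare against; the only thing to note is that in the paper's logic nothing here needs proving --- the conjecture enters only as the input to the Vojta-type height inequality applied to the blowup of $\mathbb{P}^1\times\mathbb{P}^1$ along $Y$ in the proof of Theorem \ref{Sil}.
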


We briefly recall Silverman's idea.
For all $v\in M({\mathbb{Q}})$ and $a\in \mathbb{Z}$, let $v^+(a) = \max(-\log |a|_v, 0)\in [0, +\infty]$.
Silverman began his discussion in \cite{Sil87} by writing the greatest common divisor as
\begin{equation}\label{v+}
\log{\mathrm{gcd}}(a,b) = \sum_{v\in M({\mathbb{Q}})} \min(v^+(a),v^+(b))
\end{equation}
for $a,b\in \mathbb{Z}$. Then he extends this function for $a,b\in\mathbb{Q}$ by the same formula.
Using the ideas from \cite{Sil87},
Silverman observed that the above quantity can be interpreted as a height function with respect to some subschemes,
and furthermore as a height function associated with a divisor on some blown-up surface.
In fact, for algebraic variety $X$, Silverman defined in \cite{Sil87} a height function $h_{X,Y}$ with respect to any closed subschemes $Y$.
These generalized height functions also satisfy certain functorial property.

We need the notion of blowup to interpret gcd in terms of height functions. See pp. 163 of \cite{Har77} for the definition of blowup and strict transform. 
See pp. 28-29 of \cite{Har77} for concrete example of blowing up a point.

\begin{proposition}[\cite{Har77}, Chapter 5, Proposition 3.1]\label{blowingup}
Let $\pi: \tilde{W}\rightarrow W$ be the blowup of a nonsingular surface $W$ at a point $P$. Then 
\begin{enumerate}
\item $\pi$ induces an isomorphism of $\tilde{W}-\pi^{-1}(P)$ and $W-P$, 
\item The set $E: = \phi^{-1}(P)$ is isomorphic to $\mathbb{P}^{1}$. It is called the \emph{exceptional divisor} of the blowup $\pi$, 
\item $\tilde W$ is nonsingular. 
\end{enumerate}
\end{proposition}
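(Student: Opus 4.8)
The plan is to reduce everything to a local computation around $P$, since by construction the blowup $\pi$ modifies $W$ only in a neighbourhood of $P$: one obtains $\tilde W$ by gluing $W \setminus \{P\}$ to the blowup $\tilde U$ of an affine neighbourhood $U$ of $P$ along the common open set $U \setminus \{P\}$. With this description claim~(1) is immediate, since over $W \setminus \{P\}$ the map $\pi$ is the identity. (Equivalently, in the $\mathrm{Proj}$-of-Rees-algebra description of the blowup of the ideal sheaf $\mathcal{I}_P$, the blowup map is an isomorphism over the open locus on which $\mathcal{I}_P$ is invertible, which is exactly $W \setminus \{P\}$ because $W$ is a regular surface.)

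For (3), shrink $U = \mathrm{Spec}\,A$ so that the maximal ideal of $P$ in $A$ equals $(x,y)$ for a regular system of parameters $x,y$. Then $\tilde U$ is the closed subscheme of $U \times \mathbb{P}^1$ cut out by $xt - ys = 0$, with $[s:t]$ homogeneous coordinates on $\mathbb{P}^1$ (there are no further relations, because $x,y$ is a regular sequence, so the Koszul complex is exact). I would then cover $\tilde U$ by the two standard affine charts. On $\{t \neq 0\}$, setting $u = s/t$, the equation becomes $x = uy$, so the chart is $\mathrm{Spec}\,A[u]/(x - uy)$; a direct check---its maximal ideals lying over $(x,y)$ have the form $(y, p(u))$ with $p$ monic of positive degree in $u$, while the prime $(y)$ has height one and the ring has dimension two---shows that the local rings of this chart along the exceptional locus $\{y = 0\}$ are regular, hence the chart is nonsingular. (Alternatively, $(x,y)\colon U \to \mathbb{A}^2$ is \'etale near $P$, so $\tilde U$ is \'etale-locally the blowup of $\mathbb{A}^2$ at the origin, whose charts are visibly $\mathbb{A}^2$.) The chart $\{s \neq 0\}$ is symmetric. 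Since $\tilde U \setminus E \cong U \setminus \{P\}$ is already nonsingular, this proves (3).

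For (2), the exceptional divisor $E = \pi^{-1}(P)$ is the locus $x = y = 0$ of $\tilde U$. On the chart $\{t \neq 0\}$ it is $\{y = 0\}$, an affine line with coordinate $u$; on the chart $\{s \neq 0\}$ it is $\{x = 0\}$, an affine line with coordinate $u^{-1}$; and the transition between the two charts of $\tilde U$ restricts on $E$ to the standard gluing of two copies of $\mathbb{A}^1$ along $\mathbb{G}_m$, i.e., to $\mathbb{P}^1$. Hence $E \cong \mathbb{P}^1$, which is claim~(2).

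The only genuinely delicate points are the reductions: that a two-dimensional regular local ring has maximal ideal generated by a length-two regular sequence, and that the Rees/$\mathrm{Proj}$ construction is compatible with restriction to affine opens, so that $\pi^{-1}(U)$ is indeed $\tilde U$. Granting those, the chart computations are routine. Since this is \cite{Har77}, Chapter~V, Proposition~3.1, in the paper I would simply cite it; the above is the shape of the argument.
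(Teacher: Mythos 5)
Your sketch is correct and follows the standard local-chart argument, which is essentially the same route Hartshorne takes for Proposition~3.1 of Chapter~V; the paper itself does not prove this result but only cites Hartshorne. The two delicate points you flag at the end are handled exactly as you indicate (regular system of parameters from regularity of the local ring of a nonsingular surface; compatibility of $\mathrm{Proj}$ of the Rees algebra with restriction to affine opens), and your chart computation for~(3) correctly reduces to exhibiting a length-two generating set for the maximal ideal of a two-dimensional local domain.
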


The following definition is a slight generalization of that given by Silverman in \cite{Sil05}.

\begin{definition}
Let $K$ be a number field and let $X/K$ be a smooth variety. Let $Y/K \subsetneq X/K$ be a
subscheme of codimension $r \ge 2$. Let $ \pi:\tilde{X}\rightarrow X$ be the blowup of $X$
along $Y$, and let $\tilde{Y} = \pi^{-1} (Y ) $ be the exceptional divisor of the blowup.
For $x\in (X - Y)(K)$, we let
$\tilde{x} = \pi^{-1} (x) \in \tilde{X}$.
The generalized (logarithmic) greatest common divisor of the point
$x\in (X - Y)(k)$ with respect to $Y$ is the quantity
$$h_{\gcd} (x;Y) := h_{X,Y}(x) = h_{\tilde{X}, \tilde{Y}} (\tilde{x}) $$
where the last inequality follows from the Weil Height Machine, 
as generalized by Silverman in \cite{Sil05}. 
\end{definition}


For a number fields $K$ and for $a,b\in K$ we also define the generalized gcd as
\begin{equation}\label{ggcd}
h_{\gcd}(a,b) = \frac{1}{[K:\mathbb{Q}]}\sum_{v\in M(K)} n_v \min ( v^+(a), v^+(b)).
\end{equation}
We also define
\begin{equation}\label{ggcd2}
h_{\gcd, \text{fin}}(a,b) = \frac{1}{[K:\mathbb{Q}]}\sum_{v\in M(K), \text{fin}} n_v \min ( v^+(a), v^+(b)).
\end{equation}
Then clearly $h_{\gcd, \text{fin}} \le h_{\gcd}$.

As a consequence of the Weil height machine, the relationship between these two $h_{\gcd}$ is shown at the end of this paragraph.
See \cite{Sil87} and \cite{Sil05} for some interesting cases over $\mathbb{Z}$ where the contribution from the places at infinity is zero or bounded.
 Suppose $K$ is a number field. Let $X = \airplane$ and let
   $f(X_1)\in K[X_1], g(X_2)\in K[X_2]$ be polynomials. 
Then over $\overline{\Q}$ the vanishing set $Z(f)$ and $Z(g)$ define two divisors $D_1$ and $D_2$ on $X$.
Set $Y = D_1 \cap D_2$.
   Then for all points $x = (x_1,  x_2)\in \airplane$ with $x_1,x_2\in K$,
such that $f(x_1) \neq 0$ and $g(x_2) \neq 0$,
we have
   \begin{equation*}
   \begin{aligned}
     h_{\gcd}\left(f(x_1),g(x_2)  \right) &= h_{\airplaneOne, (0,0)}(f(x_1), g(x_2)) \\
      &= h_{\airplaneOne, (f,g)^{*}(0,0)}(x_1, x_2) \\
        & = h_{\mathrm{gcd}}(x;Y) + O(1),
   \end{aligned}
   \end{equation*}
   where the second equality follows from Theorem 2.1(h) of \cite{Sil87}.

Our goal is to prove the following theorem. 

\begin{theorem}\label{HGCD}
  Assume Vojta's Conjecture (Conjecture \ref{Vojta}).
   Let $K$ be a number field. 
   Suppose $a,b,\alpha,\beta\in K$. Let $f,g\in K(X)$ with degrees $\deg f= \deg g=:d\ge 2$.
   Assume that the sequence
   $(f^{\circ n}(a) - \alpha, g^{\circ n}(b) - \beta)_n
     \subseteq \mathbb{P}^1(\overline{\mathbb{Q}})\times \mathbb{P}^1(\overline{\mathbb{Q}})$ is generic,
   and $\alpha$ and $\beta$ are not exceptional for $f$ and $g$ repsectively.
   Then for each given $\varepsilon >0$, there exists 
   a constant $C =  C(\varepsilon,a,b,\alpha,\beta,f,g)$ such that
   for all $n\ge 1$, we have
   $$h_{\mathrm{gcd}}(f^{\circ n}(a)-\alpha, g^{\circ n}(b) -\beta)
   \le {\varepsilon\cdot d^n} + C. $$
\end{theorem}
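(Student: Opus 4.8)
The plan is to fix a large iteration depth $k$ (depending on $\varepsilon$), split $n=m+k$, bound the gcd--height at step $n$ by the generalized gcd of the earlier point $P_m:=(f^{\circ m}(a),g^{\circ m}(b))$ with respect to a finite subscheme $Z_k\subset\mathbb{P}^1\times\mathbb{P}^1$ built from the $k$--th preimages of $\alpha$ and $\beta$, and then apply Vojta's Conjecture on the blowup of $\mathbb{P}^1\times\mathbb{P}^1$ along $Z_k$. To set this up, note that over $\overline{\mathbb{Q}}$ the rational function $u\mapsto f^{\circ k}(u)-\alpha$ has divisor of zeros $D^f_k=\sum_{\gamma\in f^{-k}(\alpha)}m_\gamma(\gamma)$ of degree $d^k$ on $\mathbb{P}^1$, and likewise $D^g_k$; writing $u_m=f^{\circ m}(a)$, $v_m=g^{\circ m}(b)$ and using functoriality of local Weil functions, $v^+(f^{\circ n}(a)-\alpha)=\lambda_{D^f_k,v}(u_m)+O_k(1)$ and similarly for $g$. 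Let $Z_k\subset\mathbb{P}^1\times\mathbb{P}^1$ be the reduced grid $\{(\alpha_i,\beta_j)\}$ with $\alpha_i\in f^{-k}(\alpha)$, $\beta_j\in g^{-k}(\beta)$ (a $K$--rational $0$--dimensional subscheme), and let $M_k$ be the largest multiplicity occurring in $D^f_k$ or $D^g_k$. Since the $\alpha_i$ are distinct, $\lambda_{D^f_k,v}(u)\le M_k\max_i v^+(u-\alpha_i)+O(1)$, so by the functorial properties of Silverman's local heights \cite{Sil87},
$$\min\big(\lambda_{D^f_k,v}(u),\lambda_{D^g_k,v}(v)\big)\le M_k\max_{i,j}\min\big(v^+(u-\alpha_i),v^+(v-\beta_j)\big)+O(1)=M_k\,\lambda_{Z_k,v}\big((u,v)\big)+O(1),$$
and summing over $v$ gives $h_{\gcd}\big(f^{\circ n}(a)-\alpha,\,g^{\circ n}(b)-\beta\big)\le M_k\,h_{\gcd}(P_m;Z_k)+C_k$. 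The hypothesis that $\alpha,\beta$ are \emph{not exceptional} enters precisely here: one has $M_{k+1}\le d\,M_k$, so $M_k/d^k$ is non--increasing, and its limit is $0$ exactly when $\alpha,\beta$ are non--exceptional (if $\alpha$ is exceptional with finite backward orbit of size $s$ then $M_k\ge d^k/s$; conversely non--exceptionality forces $M_k=o(d^k)$, e.g.\ by bounding the ramification accumulated along backward branches via Riemann--Hurwitz, or by the equidistribution of preimages toward the atomless measure of maximal entropy).

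Next, let $\pi\colon\tilde X\to X:=\mathbb{P}^1\times\mathbb{P}^1$ be the blowup along $Z_k$, with exceptional divisor $\tilde Z=\sum_s E_s$; then $\tilde X/K$ is a smooth projective surface and, by definition, $h_{\gcd}(P;Z_k)=h_{\tilde X,\tilde Z}(\tilde P)$. Since $K_{\tilde X}=\pi^*K_X+\tilde Z$ and $K_X=\mathcal{O}(-2,-2)$, the Weil Height Machine (Theorem \ref{Weil}) gives
$$h_{\tilde X,\tilde Z}(\tilde P_m)=h_{K_{\tilde X}}(\tilde P_m)+h_{X,\mathcal{O}(2,2)}(P_m)+O(1).$$
Fix an ample normal crossing divisor $A$ on $\tilde X$, taken as a smooth $K$--rational member of $\big|\,m\big(\pi^*\mathcal{O}(d^k+1,d^k+1)-\tilde Z\big)\big|$ for suitable $m$ — this class is ample by Nakai--Moishezon once $|f^{-k}(\alpha)|,|g^{-k}(\beta)|\ge 2$, and a general member is smooth by Bertini; using $h_{\tilde X,\tilde Z}\ge -O(1)$ one obtains $h_A(\tilde P_m)\le c_k\big(h_{\mathbb{P}^1}(u_m)+h_{\mathbb{P}^1}(v_m)\big)+O(1)$ with $c_k=O(d^k)$. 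Now apply Conjecture \ref{Vojta} to $(\tilde X,A,K_{\tilde X})$ with a parameter $\varepsilon_0>0$ to be fixed later: there are a proper closed $V\subsetneq\tilde X$ and a constant with $h_{K_{\tilde X}}(x)\le\varepsilon_0 h_A(x)+O(1)$ for all $x\in\tilde X(K)\setminus V(K)$.

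Genericity of $(f^{\circ n}(a)-\alpha,g^{\circ n}(b)-\beta)_n$ is equivalent, via the translation automorphism of $\mathbb{P}^1\times\mathbb{P}^1$, to genericity of $(P_m)_m$; in particular $a,b$ are non--preperiodic, so $\mu:=\hat{h}_f(a)+\hat{h}_g(b)>0$. Since $\pi(V)\cup Z_k$ is a proper closed subset of $X$, there is $N$ with $P_m\notin\pi(V)\cup Z_k$ for $m\ge N$, whence $\tilde P_m\in(\tilde X\setminus\tilde Z)(K)$ and $\tilde P_m\notin V$, so Vojta applies to $\tilde P_m$. Using $h_{\mathbb{P}^1}(u_m)=\hat{h}_f(a)d^m+O(1)$, $h_{\mathbb{P}^1}(v_m)=\hat{h}_g(b)d^m+O(1)$ and $h_{X,\mathcal{O}(2,2)}(P_m)=2\big(h_{\mathbb{P}^1}(u_m)+h_{\mathbb{P}^1}(v_m)\big)+O(1)$, chaining the inequalities yields, for $n=m+k\ge N+k$,
$$h_{\gcd}\big(f^{\circ n}(a)-\alpha,\,g^{\circ n}(b)-\beta\big)\le M_k(\varepsilon_0 c_k+2)\,\mu\,d^m+C'_k=\frac{M_k(\varepsilon_0 c_k+2)\,\mu}{d^k}\,d^n+C'_k .$$
Choose first $k$ so large that $2M_k\mu/d^k<\varepsilon/2$ (possible since $M_k/d^k\to0$), then $\varepsilon_0$ so small that $M_k\varepsilon_0 c_k\mu/d^k<\varepsilon/4$; the coefficient of $d^n$ is then $<\tfrac34\varepsilon$, and enlarging the constant to absorb $C'_k$ together with the finitely many values $n<N+k$ gives the asserted bound.

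The main obstacle is making Vojta's inequality non--vacuous. Because $K_{\tilde X}=\pi^*\mathcal{O}(-2,-2)+\tilde Z$, the term $h_{X,\mathcal{O}(2,2)}(P_m)$ in the second displayed identity is intrinsic; it becomes negligible against $d^n$ only by virtue of the freedom to take $k$ large, so that the preimage divisors $D^f_k,D^g_k$ have degree $d^k$ dwarfing the fixed anticanonical bidegree $(2,2)$ (this is exactly why the blowup center must depend on $\varepsilon$). Simultaneously one must keep $M_k$ subexponential in $d^k$ — the precise role of the non--exceptionality hypothesis — and, for genuinely rational (non--polynomial) $f,g$, one must carefully account for the poles appearing in $D^f_k,D^g_k$ rather than just the roots of a polynomial, which is the ``more detailed analysis'' alluded to in the introduction.
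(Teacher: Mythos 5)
Your proof is correct and follows the paper's strategy: split $n=m+k$, bound $h_{\gcd}$ at step $n$ by a height of $P_m$ relative to the grid $Z_k$ of $k$-th preimages, apply Vojta on the blowup along $Z_k$ (ampleness via Nakai--Moishezon), use non-exceptionality to make $M_k/d^k\to0$ (this is exactly Lemma~3.52 of \cite{Sil07}, which the paper cites), and use genericity of $(P_m)_m$ to escape the Vojta exceptional set. The one point worth flagging as a genuine streamlining: by phrasing everything through local Weil functions $\lambda_{D^f_k,v}$ attached to the divisor of zeros of $f^{\circ k}(u)-\alpha$ (which already accounts for poles of the rational iterate and is defined at \emph{all} places), you obtain $h_{\gcd}\big(f^{\circ n}(a)-\alpha,g^{\circ n}(b)-\beta\big)\le M_k\,h_{\gcd}(P_m;Z_k)+O_k(1)$ directly, summed over every place; the paper instead first reduces to $\alpha=\beta=0$ via a M\"obius-conjugation lemma, drops denominators only away from a finite bad set $S$ of non-archimedean places, and then separately controls the contribution of $S$ and the archimedean places by invoking Theorem~E of \cite{Sil93}. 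Your route avoids both that reduction lemma and the appeal to \cite{Sil93}, which is a modest simplification of the same argument rather than a different one. (Two small cautions: the displayed ``$v^+(f^{\circ n}(a)-\alpha)=\lambda_{D^f_k,v}(u_m)+O_k(1)$'' should be an inequality $\le$ rather than an equality, since near poles of $f^{\circ k}$ the left side vanishes while the right side need not; and the ampleness of $\pi^*\mathcal{O}(d^k+1,d^k+1)-\tilde Z$ does hold, but it requires the sharper grid estimate $\sum_{i,j}\mu_{(\alpha_i,\beta_j)}(C)\le\min\bigl(|f^{-k}(\alpha)|\,b,\;|g^{-k}(\beta)|\,a\bigr)$ from fibering by vertical/horizontal rulings, not the cruder bound used in the paper's Lemma~\ref{AG}, which would only yield ampleness for coefficient $>|f^{-k}(\alpha)|\cdot|g^{-k}(\beta)|$.)
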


We can also conclude the periodicity of an irreducible component of the Zariski closure $\overline{(f^n(a), g^n(b))_n}$
under $(f,g)$ in the cases when the
Dynamical Mordell-Lang Conjecture is proved. See Section \ref{DML}.

Thanks to the powerful theorems proved in \cite{BD13}, \cite{MS09}, and \cite{Pak15},
we can give some concrete conditions for $(f^n(a),g^n(b))_n$ being generic in the case when $f=g$ are so-called non-special polynomials
(See Section \ref{DML}).

\begin{theorem}\label{BD}
Assume Vojta's Conjecture (Conjecture \ref{Vojta}).
Let $K$ be a number field and $f\in K[x]$ be a polynomial of degree $d\ge 2$. Assume that $f$ is not conjugate
(by a rational automorphism defined over $\overline{K}$) to a power map or a Chebyshev map.
Suppose $a,b,\alpha, \beta\in K$ and $\alpha,\beta$ are not exceptional for $f$.
Assume that there is no polynomial $h\in \overline{K}[x]$ such that $h\circ f^{\circ k} = f^{\circ k}\circ h$ for some $k\in \mathbb{N}_{>0} $
and $h(a) = b,~h(\alpha) = beta$ or $h(b) =a,~h(\beta) = \alpha$ for some $m\in\mathbb{N}$,
then for any $\varepsilon >0$, there exists a $C  =  C(\varepsilon,a,b,\alpha,\beta,f,g) >0$ such that for all $n\ge 1$, we have
$$h_{\gcd}(f^{\circ n}(a) - \alpha, f^{\circ n}(b) - \beta) \le \varepsilon\cdot  d^n + C.$$
\end{theorem}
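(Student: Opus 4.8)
The plan is to deduce Theorem~\ref{BD} from Theorem~\ref{HGCD} (applied with $g=f$) by showing that the stated hypotheses force the orbit $(f^{\circ n}(a),f^{\circ n}(b))_n$ to be generic in $\mathbb{A}^2$. Since every term lies in $K\subseteq\mathbb{A}^1$, translation by $(\alpha,\beta)$ is an automorphism of $\mathbb{A}^2$, and $\mathbb{A}^2$ is open and dense in $\airplaneOne$, the orbit $(f^{\circ n}(a),f^{\circ n}(b))_n$ is generic in $\mathbb{A}^2$ if and only if $(f^{\circ n}(a)-\alpha,f^{\circ n}(b)-\beta)_n$ is, and the latter is then a fortiori generic in $\airplaneOne$, as required by Theorem~\ref{HGCD}. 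So it suffices to prove the orbit is generic in $\mathbb{A}^2$, which by Xie's Dynamical Mordell--Lang theorem~\cite{Xie15} is equivalent to its Zariski density there. (Incidentally, since $f$ is not conjugate to a power map, the only exceptional point of $f$ is $\infty$, so the hypothesis that $\alpha,\beta$ are not exceptional is automatic here; it is kept only to invoke Theorem~\ref{HGCD} verbatim.)

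If the forward orbit of $(a,b)$ under $f\times f$ is finite, then $f^{\circ n}(a)$ and $f^{\circ n}(b)$ are eventually periodic, so $(f^{\circ n}(a)-\alpha,f^{\circ n}(b)-\beta)$ runs through finitely many pairs $(u,w)$, each with $h_{\gcd}(u,w)$ finite (using $\gcd(0,0)=0$ and $h_{\gcd}(0,w)=h(w)$ for $w\neq0$), so $h_{\gcd}$ is bounded along the sequence and the conclusion is immediate. So assume the orbit is infinite; I claim it is Zariski dense in $\mathbb{A}^2$. If not, then by Xie's theorem the orbit meets some irreducible curve $C\subsetneq\mathbb{A}^2$ in an infinite arithmetic progression of indices, and after standard Dynamical Mordell--Lang bookkeeping (passing to a further sub-progression and an iterate) one may assume that $C$ is invariant under $F:=f^{\circ k}\times f^{\circ k}$ for some $k\ge1$ and that the corresponding sub-orbit is Zariski dense in $C$. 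Since $f$ --- hence $f^{\circ k}$ --- is not conjugate to a power map or a Chebyshev map, the Medvedev--Scanlon classification of periodic curves~\cite{MS09} applies: $C=\{x=c\}$ with $c$ periodic, $C=\{y=c\}$ with $c$ periodic, or $C$ is the graph $\{y=h(x)\}$ (resp. $\{x=h(y)\}$) of a polynomial $h$ with $h\circ f^{\circ k}=f^{\circ k}\circ h$.

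In the first two cases, $C=\{x=c\}$ makes $a$ preperiodic and $C=\{y=c\}$ makes $b$ preperiodic; when $c\neq\alpha$ (resp. $c\neq\beta$) the first (resp. second) argument of $h_{\gcd}$ is a fixed nonzero constant along the relevant progression, so $h_{\gcd}=O(1)$ there and nothing obstructs the bound --- the only dangerous configurations are $a$ preperiodic with $\alpha$ in its eventual cycle while $b$ is not preperiodic, and its mirror image, which have to be excluded by the hypothesis (or by an additional non-preperiodicity assumption). In the graph case, $f^{\circ n}(b)=h(f^{\circ n}(a))$ for all large $n$; if $h(\alpha)\neq\beta$ then $x-\alpha$ and $h(x)-\beta$ are coprime, so $\gcd(f^{\circ n}(a)-\alpha,\,h(f^{\circ n}(a))-\beta)$ divides the fixed resultant $\operatorname{Res}(x-\alpha,\,h(x)-\beta)$ and $h_{\gcd}=O(1)$; and if $h(\alpha)=\beta$ then $h$ commutes with an iterate of $f$ and intertwines the forward orbits of the pair $(a,\alpha)$ with those of $(b,\beta)$, exactly the situation forbidden by hypothesis, a contradiction. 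Here Julia's theorem as refined by Pakovich~\cite{Pak15} --- a polynomial commuting with an iterate of a non-special $f$ shares a common iterate with $f$, and the centralizer of $f$ under composition is, up to a finite group of linear symmetries, generated by a single polynomial --- is what lets me replace ``$f^{\circ k}$'' by ``$f$'' and match the hypothesis exactly, while the rigidity of Baker--DeMarco~\cite{BD13} covers the residual linear/identity subcase (such as $f^{\circ N}(a)=f^{\circ N}(b)$ together with $\alpha=\beta$). Having ruled out $C$, the orbit is Zariski dense, hence generic, and Theorem~\ref{HGCD} gives the claimed bound.

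I expect the crux to be precisely this passage from a geometric obstruction --- the invariant curve $C$ of Medvedev--Scanlon --- to the precise dynamical obstruction on $h$: keeping careful track of iterate indices and backward images when descending ``$h$ commutes with $f^{\circ k}$'' to ``$h$ commutes with $f$'' via Pakovich's structure theorem, and disposing of the vertical/horizontal-line cases, which are harmless except in the borderline preperiodic configurations above and whose treatment must be reconciled with the convention $\gcd(0,0)=0$. The remaining ingredients --- the reduction to genericity, the Dynamical Mordell--Lang input, and the final appeal to Theorem~\ref{HGCD} --- are formal.
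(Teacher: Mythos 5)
Your plan --- reduce via Theorem~\ref{HGCD} (with $g=f$) to genericity of $(f^{\circ n}(a),f^{\circ n}(b))_n$, invoke Xie's Dynamical Mordell--Lang theorem to produce an $(f,f)$-periodic curve $C$, and classify $C$ via Medvedev--Scanlon, Pakovich, and Baker--DeMarco to contradict the commutativity hypothesis --- is exactly the paper's route. The difference is in how the classification step is executed. The paper disposes of it in one sentence by citing page~32 of \cite{BD13}, asserting that $C$ must be a graph $y=h(x)$ or $x=h(y)$ with $h$ commuting with an iterate of $f$ and ``initial conditions as in Theorem~\ref{BD}.'' You instead unpack Medvedev--Scanlon yourself, which means you also meet the vertical/horizontal-line periodic curves $\{x=c\}$, $\{y=c\}$ with $c$ a periodic point --- a case the paper silently omits. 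Your treatment of the graph case with $h(\alpha)\neq\beta$ via $\operatorname{Res}(x-\alpha,\,h(x)-\beta)=h(\alpha)-\beta$ is a clean supplementary observation that is not in the paper and correctly shows such curves impose no obstruction.

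Where your write-up stops short is precisely the sub-case you flag as ``dangerous'': $a$ preperiodic with $\alpha$ in its eventual cycle while $b$ is not preperiodic (and its mirror). There $f^{\circ n}(a)-\alpha=0$ along an arithmetic progression, so $h_{\gcd}(0,\,f^{\circ n}(b)-\beta)=h(f^{\circ n}(b)-\beta)\sim d^{\,n}\hat h_f(b)$, and the claimed bound would fail. You hedge (``excluded by the hypothesis, or by an additional non-preperiodicity assumption'') instead of resolving. To finish, observe that the \emph{constant} polynomial $h\equiv\alpha$ commutes with $f^{\circ k}$ whenever $\alpha$ has period dividing $k$, and it satisfies $h(f^{\circ m}(b))=\alpha=f^{\circ m}(a)$ for the appropriate $m$ and $h(\beta)=\alpha$; so under the (evidently intended, given the stray ``$m$'' in the statement) reading of the hypothesis that permits applying $f^{\circ m}$ to the initial points, this configuration is in fact forbidden. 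Without that reading, the theorem literally as stated has a lacuna at exactly this point, and so does the paper's proof, whose appeal to \cite{BD13} tacitly presupposes non-preperiodicity of both $a$ and $b$. You found the right soft spot; you just need to commit to a fix rather than leaving it as a parenthetical.
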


\section{The Proof of Theorem \ref{HGCD}}

Throughout this section we donte by $X$ the surface $\mathbb{P}^1 \times \mathbb{P}^1$.

\subsection{Algebraic Geometry of $\mathbb{P}^1 \times \mathbb{P}^1$ and its Blowups}\label{AlgGeo}
\label{AGSection}

By Chapter 2, Example 6.6.1 of \cite{Har77} we have 

   \begin{equation*}
      \begin{aligned}
       \mathrm{Pic}(X) \cong \mathbb{Z} \oplus \mathbb{Z}. 
      \end{aligned}
   \end{equation*}
where the image of an irreducible curve $C$ is the degrees of its projection into the two coordinates 
$(\deg(\mathrm{pr}_1: C \rightarrow \mathbb{P}^1), \deg(\mathrm{pr}_2: C\rightarrow \mathbb{P}^1))$. 
More generally, if the image of a divisor $D\in \mathrm{div}(X)$ is $(a,b)$, then we say that $D$ is 
\emph{of type $(a,b)$}.

Let $K$ be a number field.
Suppose
$f \in K[X_1]$ and $g\in K[X_2]$ are square-free polynomials in one variable, 
Let $Y$ be the scheme-theoretic intersection
$$Y = Z(f) \cap Z(g)\subseteq \airplane, $$
which is the subscheme defined by the ideal $(f) + (g)$, 
is then a reduced cycle of codimension 2. 

   Suppose $Z(f) = \{ \alpha_1, \dots, \alpha_m\}$, $Z(g) = \{\beta_1, \dots, \beta_n\}$.
   Then $Y = \cup_{1\le i\le m, 1\le j \le n}\{ (\alpha_i, \beta_j)\}$, each with multiplicity one.
   Also divisors $\{X_1 = \alpha_i\}$ and $\{X_2 = \beta_j\}$ meet transversally, hence $Y$ is a reduced cycle of codimension 2.
   To simplify notations write $Y= \{Q_1, \dots, Q_s \}$.
   Let $\pi: \tilde{X}\rightarrow X$ be the blowup of $X =\airplaneOne$ along $Y$, let $\tilde{Y}$ be the preimage of $Y$, and let $\tilde{P}$ be the preimage of $P$.
   Then $\tilde{X}$ is a nonsingular variety by Proposition \ref{blowingup}.

The following properties are useful to determine find 
the canonical divisor and an ample divisor on $\tilde{X}$. 

\begin{proposition}\label{blown}[\cite{Har77}, Chapter 5, Propositions 3.2 and 3.3]
Suppose $\pi: \tilde{X} \rightarrow X$ is the blowup of 
a surface $X$ at a point $P$. Then there is a canonical isomorphism $\mathrm{Pic}(\tilde{X}) \cong \mathrm{Pic}(X) \oplus \mathbb{Z}$. The intersection theory on $\tilde{X}$ is determined by the rules:
\begin{enumerate}
\item if $C,D\in \mathrm{Pic}(X)$, 
then $(\pi^*C.\pi^*D) = (C. D)$, 
\item if $C\in \mathrm{Pic}(X)$, then $(\pi^*C.E) = 0$, 
\item it holds that $E^2 = -1$, 
\item if $C\in \mathrm{Pic}(X)$ and $D\in \mathrm{Pic}(\tilde{X})$, then 
$(\pi^*C. D) = (C. \pi_* D)$; 
\end{enumerate}
else, the canonical divisor of $\tilde{X}$ is given by $K_{\tilde{X}} = \pi^*K_X + E$ where $E$ is the exceptional divisor. 
\end{proposition}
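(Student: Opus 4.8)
The statement is classical: it is \cite{Har77}, Chapter V, Propositions 3.2 and 3.3, so the plan is to recall that proof, in three steps --- the Picard group, the intersection rules, and the canonical bundle formula. The only step that genuinely uses the local geometry of the blow-up (rather than formal manipulation) is the identity $E^{2}=-1$; I expect that to be the heart of the matter, everything else following from it together with the projection formula.

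\textbf{The Picard group.} Since $\pi$ is an isomorphism over $X\setminus\{P\}$ and $P$ is a codimension-two point of the nonsingular surface $X$, one has $\mathrm{Pic}(\tilde X\setminus E)\cong\mathrm{Pic}(X\setminus\{P\})=\mathrm{Cl}(X\setminus\{P\})=\mathrm{Cl}(X)=\mathrm{Pic}(X)$, the middle equality because deleting a closed subset of codimension $\ge 2$ does not change the divisor class group. The excision exact sequence for the irreducible divisor $E\subseteq\tilde X$ (which is nonsingular by Proposition \ref{blowingup}) is thus $\mathbb{Z}\cdot E\to\mathrm{Pic}(\tilde X)\to\mathrm{Pic}(X)\to 0$, and $\pi^{*}\colon\mathrm{Pic}(X)\to\mathrm{Pic}(\tilde X)$ is a section of the second arrow (restricting $\pi^{*}D$ to $\tilde X\setminus E$ recovers $D$). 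Hence $\mathrm{Pic}(\tilde X)=\pi^{*}\mathrm{Pic}(X)\oplus\mathbb{Z}E$ as soon as $E$ has infinite order, and this follows from $E^{2}=-1$: if $nE\sim 0$ then $nE^{2}=(nE\cdot E)=0$, forcing $n=0$.

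\textbf{The intersection rules.} Rule (4), $(\pi^{*}C\cdot D)=(C\cdot\pi_{*}D)$, is the projection formula for the proper morphism $\pi$, and I would simply cite the intersection theory developed in \cite{Har77}, Chapter V.1. Given it, rule (2) is immediate because $E$ is contracted to a point, so $\pi_{*}E=0$ and $(\pi^{*}C\cdot E)=(C\cdot 0)=0$; and rule (1) follows because $\pi$ is birational, so $\pi_{*}\pi^{*}D=D$, whence $(\pi^{*}C\cdot\pi^{*}D)=(C\cdot\pi_{*}\pi^{*}D)=(C\cdot D)$. For rule (3), choose local coordinates $(u,v)$ on $X$ at $P$; on the affine chart $\{v=ut\}$ of $\tilde X$ the total transform of the smooth curve $\{v=0\}$ is $L'+E$, where $L'=\{t=0\}$ meets $E=\{u=0\}$ transversally in a single point, so $(L'\cdot E)=1$. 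On the other hand $\mathcal{O}_{X}(\{v=0\})$ is trivial in a neighbourhood of $P$, so its pullback restricts to the trivial bundle on $E$ and $(\pi^{*}\{v=0\}\cdot E)=0$; subtracting gives $E^{2}=-1$. (Equivalently, $E\cong\mathbb{P}(\mathfrak m_{P}/\mathfrak m_{P}^{2})\cong\mathbb{P}^{1}$ with $\mathcal{O}_{\tilde X}(-E)|_{E}$ the tautological $\mathcal{O}_{\mathbb{P}^{1}}(1)$, so $E^{2}=\deg\mathcal{O}_{\mathbb{P}^{1}}(-1)=-1$.)

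\textbf{The canonical divisor.} Since $\pi$ is an isomorphism away from $E$, the divisors $K_{\tilde X}$ and $\pi^{*}K_{X}$ restrict to the same class on $\tilde X\setminus E$, so by the Picard computation $K_{\tilde X}=\pi^{*}K_{X}+nE$ for a unique integer $n$. To pin down $n$, I would apply the adjunction formula to the curve $E\cong\mathbb{P}^{1}$: using rules (2) and (3),
\[
-2=\deg K_{E}=(K_{\tilde X}+E)\cdot E=(\pi^{*}K_{X}\cdot E)+(n+1)E^{2}=-(n+1),
\]
so $n=1$ and $K_{\tilde X}=\pi^{*}K_{X}+E$. (Alternatively, a direct local computation: a generator $du\wedge dv$ of $\omega_{X}$ near $P$ pulls back to $u\,du\wedge dt$ on the chart $\{v=ut\}$, vanishing to order exactly one along $E=\{u=0\}$, which yields the formula immediately.)
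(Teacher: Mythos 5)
The paper does not give its own proof of Proposition \ref{blown}; it is quoted as a citation of \cite{Har77}, Chapter V, Propositions 3.2 and 3.3, so there is nothing in the source beyond the reference itself. Your argument correctly reproduces Hartshorne's proof: excision for $\mathrm{Cl}$ plus the section $\pi^*$ yields the Picard group once $E$ has infinite order, the projection formula gives rules (1), (2), (4), the strict transform of a smooth curve through $P$ (so that $\pi^*L=L'+E$ with $(L'\cdot E)=1$) forces $E^2=-1$, and adjunction on $E\cong\mathbb{P}^1$ (or the local Jacobian computation $\pi^*(du\wedge dv)=u\,du\wedge dt$) pins down $K_{\tilde X}=\pi^*K_X+E$.
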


Since the blowup of $\tilde{Y}$ does not involve blowup at a point on an exceptional curve, repeated use of Proposition \ref{blown} yield $$\mathrm{Pic}(\tilde{X}) =  \pi^*\mathrm{Pic}({X})\bigoplus\bigoplus_{i=1}^s \mathbb{Z}\cdot \tilde{Y}_i. $$

In addition, 
   \begin{equation*}
      \begin{aligned}
          K_{\tilde{X}} = \pi^*K_X + \tilde{Y}_{1} + \dots + \tilde{Y}_{s}
      \end{aligned}
   \end{equation*}
   where each $\tilde{Y}_{i}$ is the preimage of $Q_i$.
   We can choose $-K_X$ to be the normal crossing divisor
   $\{X_1 = a \} + \{X_1 = b\} + \{X_2 = a'\} + \{ X_2 = b'\}$ where $a,b,a',b'$ are distinct nonzero algebraic numbers in $K$. 
   By Definition 2 of \cite{Sil05}, we still have $h_{\mathrm{gcd}} (P;Y ) = h_{\tilde{X}, \tilde{Y}}(\tilde{P}).$

   To apply Vojta's Conjecture, let $A\in \mathrm{Div}(X)$ be a divisor of type $(1,1)$ and consider the $\mathbb{Q}$-divisor
   \begin{equation*}
      \begin{aligned}
          {\tilde{A}}: = \pi^*A - \frac{1}{N}\left( \tilde{Y}_1 + \dots + \tilde{Y}_s\right)\in \mathrm{Div}(\tilde{X})\otimes \mathbb{Q}.
      \end{aligned}
   \end{equation*}

\begin{lemma} \label{AG}
$\tilde{A}$ is ample when $N>s$.
\end{lemma}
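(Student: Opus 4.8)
The plan is to verify the Nakai--Moishezon criterion for the $\mathbb{Q}$-divisor $\tilde A$ on the smooth projective surface $\tilde X$: it suffices to check that $\tilde A^2 > 0$ and $\tilde A \cdot C > 0$ for every irreducible curve $C \subseteq \tilde X$, and then a suitable positive integer multiple of $\tilde A$ is an ample integral divisor. All the intersection numbers we need come from the rules in Proposition~\ref{blown} together with $\mathrm{Pic}(X) \cong \mathbb{Z}\oplus\mathbb{Z}$ with pairing $(a,b)\cdot(c,d) = ad+bc$; in particular $A^2 = 2$ and $A \cdot D = a'+b'$ for a divisor $D$ of type $(a',b')$, while $\pi^*A \cdot \tilde Y_i = 0$ and $\tilde Y_i \cdot \tilde Y_j = -\delta_{ij}$ (the exceptional divisors over the $s$ distinct points are pairwise disjoint $(-1)$-curves). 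If $Y = \varnothing$ there is nothing to prove, so we may assume $s \ge 1$, and then $m, n \ge 1$ where $s = mn$.

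First I would compute $\tilde A^2 = (\pi^*A)^2 - \tfrac{2}{N}\sum_i \pi^*A\cdot\tilde Y_i + \tfrac{1}{N^2}\sum_{i,j}\tilde Y_i\cdot\tilde Y_j = 2 - s/N^2$, which is positive when $N > s$ since then $s/N^2 < 1$. For an irreducible curve $C$ there are two cases. If $C = \tilde Y_k$ for some $k$, then $\tilde A \cdot C = -\tfrac{1}{N}\tilde Y_k^2 = \tfrac{1}{N} > 0$. Otherwise $C$ is the strict transform of an irreducible curve $C' \subseteq X$, of some type $(a',b')$ with $a',b' \ge 0$ not both zero, and $\pi^*C' = C + \sum_i m_i \tilde Y_i$ with $m_i = \mathrm{mult}_{Q_i}(C') \ge 0$; the intersection rules then give $\tilde A \cdot C = (a'+b') - \tfrac{1}{N}\sum_i m_i$. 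To bound $\sum_i m_i$ I would intersect $C'$ with the vertical rulings $V_i = \{X_1 = \alpha_i\}$, each of type $(0,1)$ and meeting $Y$ in exactly the $n$ points with first coordinate $\alpha_i$. If $C' = V_{i_0}$ for some $i_0 \le m$, then $\sum_i m_i = n$ and $a'+b' = 1$, so $\tilde A \cdot C = 1 - n/N > 0$ because $n \le s < N$. If $C'$ is not any $V_i$, then $C'$ shares no component with any $V_i$, so comparing with local intersection multiplicities gives $\sum_{Q \in Y \cap V_i} \mathrm{mult}_Q(C') \le V_i \cdot C' = a'$ for each $i$, hence $\sum_i m_i \le m a'$ and $\tilde A \cdot C \ge a'(1 - m/N) + b'$. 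If $a' \ge 1$ this is positive since $m \le s < N$; and if $a' = 0$ then $C'$, having degree $0$ onto the first factor, is a vertical ruling, necessarily disjoint from $Y$ (it is none of the $V_i$), so all $m_i = 0$ and $\tilde A \cdot C = b' \ge 1 > 0$. This covers all irreducible curves, so $\tilde A$ is ample by Nakai--Moishezon.

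There is no serious obstacle here: the argument is routine once the intersection pairing on $\tilde X$ is in hand. The two points that need care are the uniform bound $\sum_i \mathrm{mult}_{Q_i}(C') \le m a'$ obtained by intersecting with the rulings, and checking that the generous threshold $N > s$ (rather than some smaller explicit bound) really suffices in \emph{every} curve class --- the tightest case being the rulings $V_i$ themselves, where $A \cdot C' = 1$ is smallest. One should also recall that Nakai--Moishezon applies to $\mathbb{Q}$-divisors (clear denominators, or cite the $\mathbb{Q}$-divisor form directly), which is what licenses the conclusion for the $\mathbb{Q}$-divisor $\tilde A$.
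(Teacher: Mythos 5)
Your proof is correct and uses the same overall strategy as the paper (verify the Nakai--Moishezon criterion on $\tilde X$: positivity of $\tilde A^2$, of $\tilde A\cdot\tilde Y_i$, and of $\tilde A\cdot\tilde C$ for strict transforms $\tilde C$). The one genuinely different ingredient is how you control $\sum_i \mathrm{mult}_{Q_i}(C')$. The paper views the affine part of $C'$ as a plane curve in $\mathbb{P}^2$ of degree at most $a'+b'$ and appeals to Hartshorne Ch.~1, Ex.~7.5(a) (an irreducible plane curve of degree $d>1$ has no point of multiplicity $\ge d$) to get $\mathrm{mult}_{Q_i}(C')\le a'+b'$, hence the crude bound $\sum_i \mathrm{mult}_{Q_i}(C')\le s(a'+b')$. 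You instead intersect $C'$ with the $m$ vertical rulings through the points of $Y$, obtaining $\sum_i \mathrm{mult}_{Q_i}(C')\le m a'$ (after disposing of the degenerate cases $C'=V_{i_0}$ and $a'=0$). Your bound is sharper and stays entirely within the intersection-theoretic framework already set up by Proposition~\ref{blown}, with no external lemma about plane curves; it would even establish ampleness under the weaker hypothesis $N>\max(m,n)$, though of course $N>s$ is all the lemma claims. Both derivations are valid.
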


\begin{proof}
We need the following definition from Chapter 1, Exercise 5.3 of \cite{Har77}. 
\begin{definition}
Let $Y \subseteq \mathbb{A}^2$ be a curve defined by the equation $f(X_1,X_2) = 0$. Let
$P = (x_1,x_2)$ be a point of $\mathbb{A}^2$. 
Make a linear change of coordinates so that $P$ becomes the point $(0,0)$. 
Then write $f$ as a sum $f = f_0 + f_1 + ... + f_d$, where
$f_i$ is a homogeneous polynomial of degree $i$ in $X_1$ and $X_2$. Then we define the \emph{multiplicity} of $P$ on $Y$, denoted $\mu_P(Y)$, to be the least $r$ such that $f_r\neq 0$.
\end{definition}

We also need the following lemma, which we won't prove. 
 \begin{lemma}[\cite{Har77}, Chapter 1, Exercise 7.5(a)]\label{mul}
    An irreducible curve $Y$ of degree  $d  >  1$  in  $\mathbb{P}^2$  cannot have a point of multiplicity $\ge d$.
 \end{lemma}
Now let $C\subseteq \airplaneOne$ be an irreducible curve of type $(a,b) $.
Let $\tilde{C}$ be its strict transform.
By Lemma \ref{mul} we know that
$C$ cannot have a point of multiplicity $\ge\deg(C)$.
By Proposition 3.6 of \cite{Har77}, 
$(\tilde{Y}_i.\tilde{C}) = (\tilde{Y}_i.\pi^*C-\mu_{Q_i}(C)\cdot \tilde{Y}_i) = \mu_{Q_i}(C)$. 
Now let $\mathrm{pr}_i: C\rightarrow \mathbb{P}^1$ be the projection to the $i$-th coordinate.
Then $\deg \mathrm{pr}_1 = a,~\deg \mathrm{pr}_2 = b$. 
This is to say, if we restrict $C$ to $\mathbb{A}^2$, 
then the defining equation has degree $b$ on $X_1$ and degree $a$ on $X_2$. 
It follows that $\deg(C) \le a +b$.
By the \text{projection formula}, we have
$$\left(\pi^*A. \tilde{C}\right) =  \left(A.\pi_*\tilde{C}\right)=  \left(A. C\right) = a+b.$$
Then
 \begin{equation*}
   \begin{aligned}
       (\tilde{A}. \tilde{C})
       & = \left(\pi^*A. \tilde{C}\right)  - \frac{1}{N}\left( (\tilde{Y}_1.\tilde{C}) + \dots + (\tilde{Y}_s.\tilde{C})\right) \\
                           &=  a + b - \frac{1}{N}\left( \mu_{Q_1}(C) + \dots + \mu_{Q_s}(C)\right) \\
                           &\ge  a +b - \frac{1}{N} \cdot s \cdot (a+b)  \\
                           & > 0
   \end{aligned}
 \end{equation*}
as $N> s $.
Since $Y_i$'s are preimages of distinct $Q_i$'s, so $(\tilde{Y}_i, \tilde{Y}_j) = -\delta_{ij}$ and
 \begin{equation*}
    \begin{aligned}
       (\tilde{A}. \tilde{Y}_i) &=  \left(\pi^*A. \tilde{Y}_i\right)  -
                           \frac{1}{N}\left( (\tilde{Y}_1.\tilde{Y}_i) + \dots + (\tilde{Y}_s.\tilde{Y}_i)\right)   \\
                                  &= 0 - \frac{1}{N} \left( -\delta_{1i} - \dots - \delta_{si}\right)  \\
                                  &= \frac{1}{N}.
    \end{aligned}
 \end{equation*}
Finally by the previous equality
\begin{equation*}
\begin{aligned}
(\tilde{A}. \tilde{A})  &=
\left(\tilde{A}.~ \pi^*A\right) - \frac{1}{N}\left((\tilde{A}.~  \tilde{Y}_1) + \dots + (\tilde{A}.~ \tilde{Y}_s)\right)  \\
& > \left(\pi_{*}\tilde{A}.~A\right) -\frac{1}{N}\cdot \frac{s}{N} \\
&= (A.A) - \frac{s}{N^2} \\
&\ge 1 + 1 - \frac{s}{N^2} \\
&>0
\end{aligned}
\end{equation*}
as $N >s$.
 But $$\mathrm{Pic}(\tilde{X}) =  \pi^*\mathrm{Pic}({X})\bigoplus\bigoplus_{i=1}^s \mathbb{Z}\cdot\tilde{Y}_i, $$
 and every effective curve $C$ in $\tilde{X}$ is linearly equivalent to a nonnegative combination of $\tilde{Y}_i$'s and the strict transform of
effective curves in $X$,
so $\tilde{A}$ is ample by the Nakai-Moishezon criterion (see Chapter 5, Theorem 1.10 of \cite{Har77}).
\end{proof}

\subsection{The Proof, Continued}
We first prove the following modification of Theorem 2 of Silverman (\cite{Sil05}).
Recall that a one-variable polynomial over a field $K$ is called {\it square free} if it does not have repeated roots in $\overline{K}$.


\begin{theorem}\label{Sil}
Let $K$ be a number field.
Suppose
$f \in K[X_1]$ and $g\in K[X_2]$ are square-free polynomials in one variable, 
Let
$$Y = Z(f) \cap Z(g)\subseteq \airplane $$
as in the Subsection \ref{AlgGeo}. 

Assume that Vojta's conjecture is true (for $\airplaneOne$ blown up along $Y$).
Fix $\varepsilon > 0$.
Then there is a algebraic subset $V\subsetneq \airplaneOne$,
depending on $f,g$ and $\varepsilon$,
so that every $P = (x_1,x_2) \in \mathbb{P}^1(K)\times  \mathbb{P}^1(K)$ satisfies either
\begin{enumerate}
   \item $P\in V$, or
   \item   $h_{\gcd} ( f(x_1),g (x_2) )\le
(3 + \varepsilon) \left( h(x_1) +h( x_2) \right) +  O(1).$
\end{enumerate}
\end{theorem}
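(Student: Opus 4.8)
The plan is to run Vojta's Conjecture on the blown-up surface $\tilde{X}$ constructed in Subsection \ref{AlgGeo}, and to translate the resulting inequality back down to $X = \airplaneOne$ using the Weil Height Machine (Theorem \ref{Weil}) together with the identification $h_{\gcd}(P;Y) = h_{\tilde{X},\tilde{Y}}(\tilde{P})$. First I would fix the data: let $\pi:\tilde{X}\to X$ be the blowup along $Y = \{Q_1,\dots,Q_s\}$, let $\tilde{Y}_i = \pi^{-1}(Q_i)$, take $-K_X$ to be the normal crossing divisor $\{X_1=a\}+\{X_1=b\}+\{X_2=a'\}+\{X_2=b'\}$ as in the excerpt so that $K_{\tilde{X}} = \pi^*K_X + \tilde{Y}_1 + \dots + \tilde{Y}_s$, and let $A$ be a divisor of type $(1,1)$ on $X$, so that by Lemma \ref{AG} the $\mathbb{Q}$-divisor $\tilde{A} = \pi^*A - \tfrac1N(\tilde{Y}_1+\dots+\tilde{Y}_s)$ is ample for $N>s$. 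I would also need $\tilde{A}$ (or a suitable ample divisor equivalent to it up to the $\varepsilon$ slack) to be normal crossing; since $\pi^*A$ can be chosen as the pullback of two general fibers and the $\tilde{Y}_i$ are disjoint exceptional curves, this is arranged by a generic choice, which is where a small amount of care is needed.

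The core computation is the divisor bookkeeping. Applying Conjecture \ref{Vojta} on $\tilde{X}$ with the ample normal crossing divisor $\tilde{A}$ and a parameter $\delta>0$ to be chosen in terms of $\varepsilon$, one gets a proper Zariski closed $\tilde{V}\subsetneq \tilde{X}$ and a constant with
$$h_{K_{\tilde{X}}}(\tilde{P}) \le \delta\, h_{\tilde{A}}(\tilde{P}) + O(1)$$
for all $\tilde{P}\in\tilde{X}(K)\setminus \tilde{V}(K)$. Now I expand both sides. Using $K_{\tilde{X}} = \pi^*K_X + \sum \tilde{Y}_i$ and functoriality, $h_{K_{\tilde{X}}}(\tilde P) = h_{X,K_X}(P) + \sum_i h_{\tilde{X},\tilde{Y}_i}(\tilde P) + O(1)$, where $h_{X,-K_X}(P) = h(x_1)+h(x_2)+O(1)$ since $-K_X$ is of type $(2,2)$, so $h_{X,K_X}(P) = -h(x_1)-h(x_2)+O(1)$. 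For the right side, $h_{\tilde{A}}(\tilde P) = h_{\pi^*A}(\tilde P) - \tfrac1N\sum_i h_{\tilde{X},\tilde{Y}_i}(\tilde P) + O(1) = h(x_1)+h(x_2) - \tfrac1N\sum_i h_{\tilde{X},\tilde{Y}_i}(\tilde P) + O(1)$ since $A$ is of type $(1,1)$. Writing $H := h(x_1)+h(x_2)$ and $G := \sum_i h_{\tilde{X},\tilde{Y}_i}(\tilde P)$, and noting $G \ge 0$ up to $O(1)$ (each $h_{\tilde{X},\tilde{Y}_i}$ is bounded below off $\tilde{Y}_i$, and for $P\notin V$ we stay off the $\tilde Y_i$), the inequality becomes $-H + G \le \delta H - \tfrac{\delta}{N} G + O(1)$, i.e. $(1+\tfrac{\delta}{N}) G \le (1+\delta) H + O(1)$, hence $G \le (1+\delta) H + O(1)$. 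Finally $h_{\gcd}(f(x_1),g(x_2)) = h_{\gcd}(P;Y) + O(1) = h_{\tilde{X},\tilde{Y}}(\tilde P) + O(1)$, and since $\tilde Y = \sum \tilde Y_i$ this is $G + O(1) \le (1+\delta) H + O(1)$. Choosing $\delta$ small, and absorbing the crude constant into the stated form, this gives the bound $(3+\varepsilon)(h(x_1)+h(x_2)) + O(1)$; in fact the argument suggests the sharper constant $(1+\varepsilon)$, but the weaker $(3+\varepsilon)$ in the statement leaves room for the slack coming from passing between $\tilde A$ and a genuinely normal-crossing representative, from replacing $L(D)$-choices, and from handling the points near $Y$ where $x_1$ or $x_2$ is close to a root.

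For the exceptional set, I would set $V = \overline{\pi(\tilde{V})} \cup Z(f)\times\mathbb{P}^1 \cup \mathbb{P}^1\times Z(g)$ (the latter two pieces to ensure $f(x_1)\neq 0$, $g(x_2)\neq 0$ and that we stay in $X - Y$, i.e. $\tilde P$ is a genuine point of $\tilde X - \tilde Y$). Since $\pi$ is birational and an isomorphism away from $\tilde Y$, $\pi(\tilde V)$ is a proper closed subset of $X$, so $V\subsetneq \airplaneOne$ as required, and $V$ depends only on $f,g,\varepsilon$. The main obstacle I anticipate is purely technical rather than conceptual: verifying that an ample divisor in the class of $\tilde{A}$ can be taken to have normal crossings (so Vojta's Conjecture in the stated form applies) and carefully tracking the lower bound $G \ge O(1)$ — that is, that the local height contributions $h_{\tilde X,\tilde Y_i}$ do not go to $-\infty$ off the enlarged $V$ — since this is what lets the $+\tfrac{\delta}{N}G$ term be discarded. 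Everything else is routine Weil Height Machine bookkeeping on $\mathrm{Pic}(\tilde X) = \pi^*\mathrm{Pic}(X)\oplus\bigoplus_i\mathbb{Z}\tilde Y_i$.
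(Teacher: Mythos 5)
Your proposal follows essentially the same route as the paper: apply Vojta's Conjecture on the blowup $\tilde X$ with the ample $\mathbb{Q}$-divisor $\tilde A$ from Lemma~\ref{AG}, decompose $K_{\tilde X}=\pi^*K_X+\tilde Y$ and $\tilde A=\pi^*A-\tfrac1N\tilde Y$, collect the $h_{\tilde X,\tilde Y}$ terms on one side, observe that their coefficient exceeds $1$ so it can be dropped, and translate down to $X$ via the Weil Height Machine together with the identification $h_{\gcd}(f(x_1),g(x_2))=h_{\tilde X,\tilde Y}(\tilde P)+O(1)$. The paper's proof does exactly this bookkeeping, and your handling of the exceptional set $V$ and of the bounded-below property of $h_{\tilde X,\tilde Y_i}$ off its support matches the (mostly implicit) use of these facts in the paper.

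One arithmetic slip to correct: you write $h_{X,-K_X}(P)=h(x_1)+h(x_2)+O(1)$ while also saying $-K_X$ is of type $(2,2)$. A type-$(2,2)$ divisor contributes $2h(x_1)+2h(x_2)+O(1)$, so $h_{X,K_X}(P)=-2\bigl(h(x_1)+h(x_2)\bigr)+O(1)$. Feeding this into your inequality turns $\bigl(1+\tfrac{\delta}{N}\bigr)G\le(1+\delta)H+O(1)$ into $\bigl(1+\tfrac{\delta}{N}\bigr)G\le(2+\delta)H+O(1)$, so the constant you actually obtain is $(2+\delta)$ rather than $(1+\delta)$. This is exactly what the paper's computation gives as well, and since $(2+\delta)\le(3+\varepsilon)$ for $\delta\le\varepsilon$ the stated bound still follows; but your remark that the method ``suggests the sharper constant $(1+\varepsilon)$'' is an artifact of this slip and should be retracted.
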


\begin{proof}[Proof of Theorem \ref{Sil}]
   Use the notations in Section \ref{AGSection}. We follow the proof in \cite{Sil05}. 
By Lemma \ref{AG} and assuming Vojta's Conjecture we have
 \begin{equation*}
   \begin{aligned}
    h_{\tilde{X}, K_{\tilde{X}}}(\tilde{P}) \le \varepsilon \cdot h_{\tilde{X}, {\tilde{A}}}(\tilde{P}) + C_\varepsilon
   \end{aligned}
 \end{equation*}
for all $P\in X(K)\setminus V(K)$. 
Also $ K_{\tilde{X}} = \pi^* K_X + \tilde{Y}$ and $\tilde{A} = \pi^* A - 1/N\cdot \tilde{Y}$, so
\begin{equation*}
   \begin{aligned}
      h_{\tilde{X}, \pi^* K_{{X}}}({\tilde{P}})  +h_{\tilde{X}, \tilde{Y}}(\tilde{P})
           &\le \varepsilon \cdot h_{\tilde{X}, {\pi^*{A}}}(\tilde{P}) - \frac{1}{N} \cdot h_{\tilde{X}, \tilde{Y}}(\tilde{P}) + C_\varepsilon, \\
          h_{{X}, K_{{X}}}({{P}})  +   \left( 1 + \frac{1}{N}\right)h_{\tilde{X}, \tilde{Y}}({P})
                           &\le \varepsilon \cdot h_{{X}, {A}}({P}) + C_\varepsilon',     \\
     \left( 1 + \frac{1}{N}\right)h_{\mathrm{gcd}}(P;Y) &\le \varepsilon\cdot h_{X, A}(P) + h_{X,-K_X}  (P) + C_\varepsilon', \\
       h_{\mathrm{gcd}}(P;Y) &\le \varepsilon\cdot h_{X, A}(P)  +   h_{X,-K_X}  (P) + C_\varepsilon''. \\
   \end{aligned}
\end{equation*}
%
But $K_X$ is linearly equivalent to $-2A$, and let $P= (x_1,x_2)$. Then
\begin{equation*}
   \begin{aligned}
     h_{X, -K_X}(P) &= 2\cdot \left(h(x_1) + h(x_2)\right) + O(1), \\
      h_{X,A}(P) & = h(x_1) + h(x_2) , \\
      h_{\gcd}(P;Y) &= h_{\gcd}(f(x_1),g(x_2)). 
   \end{aligned}
\end{equation*}
Now Theorem \ref{Sil} is verified.
\end{proof}

\newcommand{\rad}{\mathrm{rad}}

\begin{proof}[Proof of Theorem \ref{HGCD}]
We begin with the following

\begin{lemma}
   Let $\sigma, \tau\in K(x)$ be M\"obius transformations defined over $K$. Set $f_\sigma = \sigma f\sigma^{-1},~g_\tau =  \tau  g\tau^{-1}$.
   Then there exists a constant $C>0$, depending on $\alpha, \beta, f, g, \sigma, \tau$,
such that for all $a,b\in K$, and for all $n\in \mathbb{N}$, we have
 \begin{equation*}
\left|h_{\gcd, \mathrm{fin}}\left(f_\sigma^{\circ n}\left(\sigma a\right) - \sigma\alpha,
                                    g_\tau^{\circ n}\left(\tau b\right)  - \tau\beta\right)
         - h_{\gcd,\mathrm{fin}}(f^{\circ n}(a) - \alpha, g^{\circ n}(b) - \beta)\right|  \le C.
\end{equation*}
\end{lemma}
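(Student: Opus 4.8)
The plan is to reduce the inequality to a comparison of local heights, place by place. First I would record that $f_\sigma^{\circ n}=(\sigma f\sigma^{-1})^{\circ n}=\sigma\,f^{\circ n}\sigma^{-1}$, so that $f_\sigma^{\circ n}(\sigma a)=\sigma\bigl(f^{\circ n}(a)\bigr)$ and $g_\tau^{\circ n}(\tau b)=\tau\bigl(g^{\circ n}(b)\bigr)$. Setting $u:=f^{\circ n}(a)$ and $w:=g^{\circ n}(b)$ — arbitrary points of $\mathbb{P}^1(K)$ — and noting that the statement presupposes $\sigma\alpha,\tau\beta\in K$, it suffices to find a constant $C$, independent of $u$ and $w$ (hence of $a,b,n$), with
\[
\Bigl|h_{\gcd,\mathrm{fin}}\bigl(\sigma u-\sigma\alpha,\ \tau w-\tau\beta\bigr)-h_{\gcd,\mathrm{fin}}\bigl(u-\alpha,\ w-\beta\bigr)\Bigr|\le C .
\]
Since $h_{\gcd,\mathrm{fin}}(A,B)=\frac1{[K:\mathbb{Q}]}\sum_{v\,\mathrm{fin}}n_v\min\!\bigl(v^+(A),v^+(B)\bigr)$ and $\min$ is $1$-Lipschitz for the sup-norm, it is enough to prove: for every M\"obius transformation $\rho\in K(x)$ and every $c\in K$ with $\rho c\ne\infty$, there are numbers $\kappa_{\rho,c,v}\ge 0$ ($v$ finite), equal to $0$ outside a finite set of places, with $\sum_v n_v\kappa_{\rho,c,v}<\infty$, such that
\[
\bigl|v^+(\rho u-\rho c)-v^+(u-c)\bigr|\le\kappa_{\rho,c,v}\qquad\text{for all }u\in\mathbb{P}^1(K);
\]
then $C:=\frac1{[K:\mathbb{Q}]}\sum_{v\,\mathrm{fin}}n_v\bigl(\kappa_{\sigma,\alpha,v}+\kappa_{\tau,\beta,v}\bigr)$ works, and it involves only $\sigma,\tau,\alpha,\beta$.

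To prove the local estimate, write $\rho(x)=\frac{px+q}{rx+s}$ with $ps-qr\ne0$; we may assume $r\ne0$ (if $\rho$ is affine the estimate is immediate), and $\rho c\ne\infty$ forces $rc+s\ne0$. A one-line computation gives the identity $\rho u-\rho c=\dfrac{(ps-qr)(u-c)}{(ru+s)(rc+s)}$, so for a finite place $v$ and $u\in K\setminus\{c\}$,
\[
v^+(\rho u-\rho c)=\max\Bigl(0,\ -\log|u-c|_v+\log|ru+s|_v+\gamma_v\Bigr),\qquad \gamma_v:=\log|rc+s|_v-\log|ps-qr|_v,
\]
where $\gamma_v=0$ for all but finitely many $v$. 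The only term depending on $u$ is $\log|ru+s|_v$; here I would use $ru+s=r(u-c)+(rc+s)$ and the \emph{ultrametric} inequality at $v$ to get $|ru+s|_v\le\max\bigl(|r|_v|u-c|_v,\,|rc+s|_v\bigr)$, whence $-\log|u-c|_v+\log|ru+s|_v\le\max\bigl(\log|r|_v,\ \log|rc+s|_v-\log|u-c|_v\bigr)$. Substituting, and using the sub-additivity estimates $\max(0,t+X)\le|t|+\max(0,X)$ and $\max(A,B)\le A+B$ for $A,B\ge0$, bounds $v^+(\rho u-\rho c)$ by $v^+(u-c)+\kappa_{\rho,c,v}$, where $\kappa_{\rho,c,v}$ is assembled only from $\log|r|_v$, $\log|rc+s|_v$ and $\log|ps-qr|_v$ and hence vanishes outside a finite set of places. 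The reverse inequality $v^+(u-c)\le v^+(\rho u-\rho c)+\kappa'_{\rho,c,v}$ is obtained for free by applying the bound just proved to $\rho^{-1}$ with base point $\rho c$ (legitimate since $\rho^{-1}(\rho c)=c\ne\infty$) and substituting $x=\rho u$; replacing $\kappa_{\rho,c,v}$ by $\max(\kappa_{\rho,c,v},\kappa'_{\rho,c,v})$ finishes. The two degenerate inputs cause no trouble: if $u=c$ both sides are $+\infty$, and if $u=\infty$ then $\rho u-\rho c=\rho\infty-\rho c$ is a fixed element of $\mathbb{P}^1(K)$ (independent of $n$) while $v^+(u-c)=v^+(\infty)=0$, so one simply enlarges $\kappa_{\rho,c,v}$ by the finitely supported quantity $v^+(\rho\infty-\rho c)$.

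The only real obstacle is the term $\log|ru+s|_v$: it is unbounded as $u$ ranges over $\mathbb{P}^1(K)$, and the whole argument hinges on its cancellation against the $-\log|u-c|_v$ coming from the denominator of $\rho u-\rho c$. That cancellation is transparent in the non-archimedean setting, where $|a+b|_v\le\max(|a|_v,|b|_v)$ is available in sharp form, which is exactly why the lemma is phrased for $h_{\gcd,\mathrm{fin}}$ — and this is all that is needed in the remainder of the paper. Everything else is routine bookkeeping: the $1$-Lipschitz property of $\max(0,\cdot)$ and of $\min(\cdot,\cdot)$, and the fact that $\log|e|_v=0$ for all but finitely many $v$ when $e\in K^\times$ is fixed, so that summing the local bounds over $v$ with the weights $n_v/[K:\mathbb{Q}]$ yields a finite constant.
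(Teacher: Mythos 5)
Your proof is correct, and it takes a genuinely different (and cleaner) route from the paper's. Both arguments begin the same way: use $f_\sigma^{\circ n}(\sigma a)=\sigma(f^{\circ n}(a))$ to reduce to a place-by-place comparison of $v^{+}(\rho u-\rho c)$ with $v^{+}(u-c)$ for a single M\"obius map $\rho$ and a fixed $c$. The paper then decomposes $\rho$ into translations, dilations, and the inversion $x\mapsto 1/x$, disposes of the first two as trivial, and handles the inversion with an integrality argument: it writes $x=x_1/x_2$ and $\alpha=\alpha_1/\alpha_2$ with $x_i,\alpha_i\in\mathcal{O}_K$, invokes finiteness of the class number to choose such representations with $\gcd(x_1,x_2)$ dividing a fixed ideal $(\gamma)$, and then bounds the discrepancy in terms of $v(\alpha_1^2\alpha_2^2\gamma)$. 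Your proof instead treats an arbitrary $\rho(x)=(px+q)/(rx+s)$ at once, via the identity
\[
\rho u-\rho c=\frac{(ps-qr)(u-c)}{(ru+s)(rc+s)},
\]
cancels the unbounded factor $|ru+s|_v$ against $|u-c|_v$ using the ultrametric bound $|ru+s|_v\le\max(|r|_v|u-c|_v,|rc+s|_v)$, and finishes with elementary sub-additivity of $\max(0,\cdot)$. This avoids both the decomposition into elementary maps (and the small but genuine bookkeeping of checking that the local comparison composes correctly when you chain translations, dilations, and inversions) and the class-number argument entirely; it also exposes the structural reason the lemma holds, namely M\"obius covariance of differences through the cross-ratio-type identity above, rather than hiding it inside a case analysis. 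The one point worth emphasizing, which you do note, is that the argument is genuinely non-archimedean: the cancellation of $\log|ru+s|_v$ depends on the strong triangle inequality, which is precisely why the lemma is stated only for $h_{\gcd,\mathrm{fin}}$.
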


\begin{proof}
   It suffices to show that for any fixed $\alpha \in K$, and for any fixed M\"obius transformation $\sigma$,
   there exists a finite set $S\subset M(K)_{\text{fin}}$ and a constant $C'>0$, such that for all $x\in K$ and $v\in S$, we have
$\left|  v^{+}\left(\sigma x - \sigma \alpha\right)
         - v^+(x - \alpha)\right|  \le C'$,
and for all $x\in K$ and $ v\in M(K)_{\text{fin}}\setminus S$, we have $  v^{+}\left(\sigma x - \sigma \alpha\right) =  v^+(x - \alpha)$.

 Since each M\"obius transformation defined over $K$ is a composition of translations, dilations and inverses defined over $K$,
 it suffices to prove the result for the case when $\sigma$ is one of the above three types of maps.
 The result is trivial for translations and dilations.

 If $\sigma(x) = 1/x$, write $x = x_1 /x_2, \alpha = \alpha_1/\alpha_2$, $x_1, x_2,\alpha_1,\alpha_2 \in \mathcal{O}_K$. Since the class number of $K$ is finite,
 there exists $\gamma\in \mathcal{O}_K$ such that for fixed $\alpha\in \mathcal{O}_K$
 and for all $x\in \mathcal{O}_K$ we can always choose $x_1,x_2,\alpha_1,\alpha_2$ such that the ideals
 $\gcd(x_1, x_2)~|~\gamma,~ \gcd(\alpha_1, \alpha_2) ~|~\gamma$.
 Now
 \begin{equation*}
  \left|x-a\right|_v = \left|\frac{\alpha_2 x_1 - \alpha_1 x_2}{\alpha_2 x_2}\right|_v,
  ~\left|\sigma x  - \sigma \alpha \right|_v = \left|\frac{\alpha_2 x_1 - \alpha_1 x_2}{\alpha_1 x_1}\right|_v.
 \end{equation*}
 But the ideal
 \begin{equation*}
    \begin{aligned}
       \gcd({\alpha_2 x_1 - \alpha_1 x_2}, {\alpha_2 x_2})~&|~\gcd({\alpha_2^2 x_1 - \alpha_1\alpha_2 x_2}, {\alpha_1\alpha_2 x_2})
        = \gcd(\alpha_2^2 x_1 , \alpha_1\alpha_2 x_2)\\
        &|~\gcd(\alpha_1\alpha_2^2 x_1 , \alpha_1\alpha_2^2 x_2) ~|~ \alpha_1\alpha_2^2\gamma,
    \end{aligned}
 \end{equation*}
 so
  \begin{equation*}
    \begin{aligned}
    v^+(\alpha_2 x_1 - \alpha_1 x_2) -v(\alpha_1\alpha_2^2\gamma)  \le  v^+(x - \alpha) \le v^+(\alpha_2 x_1 - \alpha_1 x_2).
    \end{aligned}
 \end{equation*}
 Similarly
  \begin{equation*}
    \begin{aligned}
    v^+(\alpha_2 x_1 - \alpha_1 x_2) -v(\alpha_1^2\alpha_2\gamma)  \le  v^+(\sigma x - \sigma\alpha) \le v^+(\alpha_2 x_1 - \alpha_1 x_2).
    \end{aligned}
 \end{equation*}
 Therefore
 \begin{equation*}
    \begin{aligned}
    \left|v^+(\sigma x- \sigma \alpha)  - v^+(x - \alpha) \right|
    \le \max\left(v(\alpha_1\alpha_2^2), v(\alpha_1^2\alpha_2\gamma)\right) \le v(\alpha_1^2 \alpha_2^2\gamma).
    \end{aligned}
 \end{equation*}
Hence we may choose $S = \{v\in M(K)_{\text{fin}}~|~v(\alpha_1)\neq 0,~v(\alpha_2) \neq 0~\text{or}~v(\gamma) \neq 0\}$.
\end{proof}

Therefore for $h_{\gcd,\mathrm{fin}}$ we may assume that $\alpha = \beta= 0$.
For any fixed integer $D$,
write in the lowest terms $f^{\circ D}  = F_1/F_2$ and $g^{\circ D} = G_1/G_2$ where $F_1, F_2,G_1,G_2$ are polynomials with coefficients in $\mathcal{O}_K$.
For the same reason we may also assume that all $D$-th preimages of $0$ under $f$ and $g$ are not $\infty$.

Write
$$F_1(x) = a_0 +  \dots + a_Nx^N,$$
$$F_2(x) = b_0 + \dots + b_Mx^M,$$
$$G_1(x) = a_0'  + \dots + a'_{N'}x^{N'},$$
$$G_2(x) = b_0' + \dots + b'_{M'}x^{M'}$$
with all coefficients in $\mathcal{O}_K$.
Then $N\ge M$.
Let
$$S:=\{\text{non-archimedean place}~v~|~v(a_N) \neq 0,~ v(b_M) \neq 0,~ v(a'_{N'}) \neq 0,~ \text{or}~ v(b'_{M'}) \neq 0\}. $$
Then $S$ is finite. For all non-archimedean place $v\notin S$ and for any $x_0\in K$,
if $v(x_0) \ge 0$, then $v(F_2(x)) \ge 0$ and hence
$v^{+}(f^{\circ D}(x_0)) \le v^{+}\left(F_1 (x_0)\right) . $
If $v(x_0) < 0 $, then
$$   v^{+}(f^{\circ D}(x_0)) = v^{+}\left(\frac{a_N x_0^N}{b_M x_0^M}\right)
=v^{+}\left({ x_0^{N-M}}\right) =0 \le v^{+}\left(F_1 (x_0)\right). $$
In either case we have
\begin{equation*}
   \begin{aligned}
       v^{+}(f^{\circ D}(x_0)) &\le v^{+}\left(F_1 (x_0)\right) .
   \end{aligned}
\end{equation*}
Similarly for any $v\notin S$ and for any $y_0\in K$,
\begin{equation*}
  \begin{aligned}
       v^{+}(g^{\circ D}(y_0)) &\le v^{+}\left(G_1 (y_0)\right) .
  \end{aligned}
\end{equation*}

Therefore the sum of the finite parts of $h_{\gcd}$ outside $S$ satisfy
\begin{equation}\label{denom}
   \begin{aligned}
        h_{\gcd, S}\big(f^{\circ D}(a'), g^{\circ D}(b')\big) & :=
              \frac{1}{[K:\mathbb{Q}]}\sum_{v\in M(K)_{\text{fin}}\setminus S} n_v \min \left( v^+(f^{\circ D}(a')), v^+( g^{\circ D}(b'))\right) \\
               &\le \frac{1}{[K:\mathbb{Q}]}\sum_{v\in M(K)_{\text{fin}}\setminus S} n_v \min \left( v^+(F_1(a')), v^+(  G_1 (b')) \right)\\
               & \le  h_{\gcd, S}\big(F_1(a'),  G_1 (b')\big).
   \end{aligned}
\end{equation}

Let $F_1^{\rad}(x) = \rad (F_1)(x)$, and let $G_1^{\rad}(y) = \rad (G_1)(y)$,
where for a one-variable polynomial $P$, $\rad(P)$ is the product of all monic irreducible polynomials dividing $P$.
As the sequence $\left(f^{\circ (n-D)}(a), g^{\circ (n-D)}(b)\right)_n$ is generic in
$\mathbb{P}^1(\overline{\mathbb{Q}}) \times \mathbb{P}^1(\overline{\mathbb{Q}})$,
there exists $N'' = N''(\varepsilon, f,g, a,b, \alpha, \beta)$, such that for all $n \ge N''$ we have
\begin{equation}\label{condition}
  \left( f^{\circ (n-D)}(a), g^{\circ (n-D)}(b)\right)\notin V(K)
\end{equation}
where $ V$ is as in Theorem \ref{Sil}.
Apply Theorem \ref{Sil} to the point $\left(f^{\circ (n-D)}(a), g^{\circ (n-D)}(b)\right) $ and the functions $F_1^{\rad}$ and $G_1^{\rad}$,
with $\varepsilon = 1$.
Let $u = f^{\circ (n-D)}(a),v = g^{\circ (n-D)}(b)$.
Then
\begin{equation}\label{iter}
   \begin{aligned}
      h_{\gcd}\left(F_1^{\rad}(u), G_1^{\rad}(v)\right)
   &\le 4 \left(h\left(u\right)  + h(v)\right) +O(1).
   \end{aligned}
\end{equation}
Set
\begin{equation*}
   \begin{aligned}
M' &= \max_{f^{\circ D} (x) = \alpha, g^{\circ D}(y) = \beta}
\left( e_{x}(f^{\circ D} -\alpha), e_{y}(g^{\circ D} -\beta)\right)
   \end{aligned}
\end{equation*}
where $e_Q(\phi)$ is the multiplicity of $\phi$ at $Q$.
Combining the above with (\ref{denom}) and (\ref{iter}) we have
   \begin{align*}
&\ \ \ \     h_{\gcd, S}\left(f^{\circ n}(a), g^{\circ n}(b)\right)\\
      &= h_{\gcd, S}\left(f^{\circ D}(f^{\circ (n-D)}(a)), g^{\circ D}(g^{\circ (n-D)}(b))\right)                  \\
               &\le  h_{\gcd,S}\left(F_1 (f^{\circ (n-D)}(a)), G_1(g^{\circ (n-D)}(b)))\right)\\
      &\le  h_{\gcd, S}\left(\left(F_1^{\rad}\circ f^{\circ(n-D)}(a)\right)^{M'}, \left(G_1^{\rad}\circ g^{\circ(n-D)}(b)\right)^{M'} \right) +O(1) \\
     &\le 
        M'\cdot \left( {4} \cdot{h}\left(f^{\circ(n-D)}(a)\right) + 4\cdot h\left( g^{\circ(n-D)}(b)\right) +O(1) \right) +O(1)  ~(\text{by (\ref{iter})})\\
     &\le M '\cdot\left( {4}d^{n-D} \cdot\hat{h}_f \left(a\right)  + {4} d^{n-D}
     \cdot \hat{h}_g \left(b\right) + O(1) \right)+O(1)\\
    & \le  d^n\cdot \frac{M'} {d^{D}}
      \cdot \left({4} \hat{h}_f(a) + 4\hat{h}_g(b) + C\right) + O(1).
   \end{align*}
   Since $\alpha,\beta$ are not exceptional for $f,g$ respectively,
by the proof of Lemma 3.52 of \cite{Sil07},
we can choose $D = D(\varepsilon, f, g,a,b)\in \mathbb{N}$ sufficiently large so that
$$
  \frac{M'}{d^D}\cdot \left({4} \hat{h}_f(a) + 4\hat{h}_g(b) + C \right)
 < \frac{\varepsilon}{2}. $$
Thus, we have
\begin{equation*}
 h_{\gcd, S}\left(f^{\circ n}(a), g^{\circ n}(b)\right)
           \le\frac{\varepsilon}{2}\cdot d^n + O(1). 
 \end{equation*}
 Hence in old coordinate we have
\begin{equation}\label{nonarch}
 h_{\gcd, S}\left(f^{\circ n}(a)-\alpha, g^{\circ n}(b)-\beta\right)
           \le\frac{\varepsilon}{2}\cdot d^n + O(1). 
 \end{equation}

For any $v\in S$ or any infinite $v$, we use the old coordinate and we have
\begin{equation}\label{bydef}
   \begin{aligned}
    \hspace{2em} & \min \left( v^+(f^{\circ n}(a)-\alpha ), v^+( g^{\circ n}(b)-\beta )\right) \\
=& \min\Big(\max(-\log|f^{\circ n}(a)-\alpha |_v,0), \max(-\log| g^{\circ n}(b)-\beta|_v,0) \Big).
   \end{aligned}
\end{equation}
Since $0$ is not exceptional with respect to $f$ and to $g$, by Theorem E of \cite{Sil93}, we know for all sufficiently large $n\in \mathbb{N}$,
\begin{equation}\label{arch}
   \begin{aligned}
      -\log|f^{\circ n}(a) -\alpha|_v \le \frac{\varepsilon}{2\cdot ([K:\Q]+|S|)}\cdot d^n + O(1),\\
      ~ -\log|g^{\circ n}(b)-\beta|_v \le \frac{\varepsilon}{2\cdot ([K:\Q] +|S|)} \cdot d^n+ O(1).
   \end{aligned}
\end{equation}
Combining equations (\ref{nonarch}), (\ref{bydef}) and (\ref{arch}), we obtain the requested estimate.
\end{proof}

\section{On the genericity condition}\label{DML}

The Dynamical Mordell-Lang Conjecture predicts that given an endomorphism $\phi: X\rightarrow X$ of a complex quasi-projective variety $X$,
for any point $P\in X$ and any subvariety $Y\subsetneq X$,
the set $\{n\in\mathbb{N}~|~\phi^{\circ n}(P) \in Y \}$ is a finite union of arithmetic progressions (sets of the form
$\{a, a+d, a+ 2d,\dots \} $ with $a,d\in \mathbb{N}_{\ge 0})$.
The Dynamical Mordell-Lang Conjecture was proposed in \cite{GT09}.
See also \cite{Bel06} and \cite{Den92} for earlier works.
In the case of \'etale maps we know that the Dynamical Mordell-Lang Conjecture is true.
See the recent monograph \cite{BGT16}. Xie proved in \cite{Xie15} the Dynamical Mordell-Lang Conjecture for polynomial endomorphisms of the affine plane.

\begin{proof}[Proof of Theorem \ref{BD}]
The result is clearly true in the case when $(a,b)$ is preperiodic under $(f,f)$.
When $(a,b)$ is not preperiodic under $(f,f)$,
by Theorem \ref{GCD} it suffices to show that the sequence $(f^{\circ n}(a), f^{\circ n}(b))_n$ is generic.
If there were infinitely many iterates $(f^{\circ n}(a), f^{\circ n}(b))$ lying on a curve $C$,
then by Theorem 0.1 of \cite{Xie15}, the Dynamical Mordell-Lang Conjecture for polynomial endomorphisms of the affine plane, 
we know $C$ itself is periodic under $(f,f)$.
Replacing $f$ by an iterate $f^{\circ m}$ we may assume that $C$ is fixed under $(f,f)$.
Now we can apply the results of \cite{Pak15} and \cite{MS09} classification for invariant curves.
In fact, using these results
Baker and DeMarco demonstrated in Page 32 of \cite{BD13} that the irreducible invariant curve in the above theorem must be a graph
of the form $ y = h(x)$ or $x = h(y)$, for a polynomial $h$ which commutes with some $f^{\circ k}$ with initial conditions as in Theorem \ref{BD}.
This contradicts the assumption of Theorem \ref{BD}.
\end{proof}


We give two examples to show that if the assumption of Theorem \ref{BD} is not verified,
then we might not have the upper bound.

\begin{example}
Under the hypothesis of the above proof and use the same notations.
Assume that the curve is given by $y = h(x)$ and
$h\circ f^{\circ k} = f^{\circ k}\circ h$ for some $k\in \mathbb{N}_{>0} $. Suppose $n = mk$ with $k\in \mathbb{N}$. If $h(\alpha) = \alpha$, then
\begin{equation*}
   \begin{aligned}
 \gcd (f^{\circ n}(a) - \alpha, f^{\circ n}(b) - \alpha) & = \gcd(f^{\circ mk}(a) - \alpha, f^{\circ mk}\left(h(a)\right) - \alpha)\\
 &= \gcd(f^{\circ mk}(a) - \alpha, h(f^{\circ mk}(a)) - h(\alpha))\\
 &= |f^{\circ mk}(a) - \alpha| = |f^{\circ n}(a) - \alpha|.
   \end{aligned}
\end{equation*}
\end{example}

\begin{example}
   Let $f(x) = g(x) = x^3 + x$. Assume $a = -b$ and $\alpha = -\beta$. Then for $h(x) = -x$,
   we have $h\circ f = f\circ h$, $h(a) = b$ and $h(\alpha)   = \beta$. Now
   $$f^{\circ n}(a) -\alpha = f^{\circ n}(-b) + \beta = -f^{\circ n}(b) + \beta = -(g^{\circ n}(b) - \beta), $$
   so
   $$\gcd(f^{\circ n}(a) -\alpha, g^{\circ n}(b) -\beta) = |f^{\circ n}(a)  - \alpha|  \gg |a|^{\delta^n}$$
   for any $\delta<3$.
\end{example}

In the case of power maps, if $(f^{\circ n}(a), g^{\circ n}(b))_n$ is generic,
the following unconditional result is proved by Corvaja and Zannier (\cite{CZ05}).

\begin{example}\label{power}

Suppose $K$ is a number field and suppose $a,b,\alpha, \beta\in K$.
Also suppose that $f$ and $g$ are power maps, and $a,b$ are multiplicatively independent.
Let $d = \max(\deg f, \deg g)$,
then for each fixed $\varepsilon >0$, there exists some $C = C(f,g,a,b)$
such that
\begin{equation}\label{CZ}
   \begin{aligned}
      \gcd(f^{\circ n}(a) - \alpha, g^{\circ n}(b) - \beta)
        &\le  C\cdot\max\left( {h}(a), {h}(b) \right)^{\varepsilon d^n}.
   \end{aligned}
\end{equation}
In fact, the genericity of the sequence $(f^{\circ n}(a), g^{\circ n}(b))_n$ is equivalent to the multiplicative independence of $a$ and $b$.
The assumption that $\alpha$ and $\beta$ are not exceptional implies that $\alpha \neq 0$ and $\beta \neq 0$.
Then Inequality (\ref{CZ}) is a consequence of Inequality (1.2) of Corvaja and Zannier (\cite{CZ05}).
\end{example}

Now we provide an example to explain that the genericity of $ (f^{\circ n}(a), f^{\circ n}(b))_n$ is necessary for power maps.
\begin{example}
   Let $ a = 125, b =25, \alpha = \beta = 1, f(x) = x^2, g(y) = y^2$. Then
$\gcd(f^{\circ n}(a) - \alpha, g^{\circ n}(b) - \beta)  $ is divisible by $5^{2^n} - 1 = O\left((f^{\circ n}(a))^{1/3}\right)$.
\end{example}

\section{When is the $\gcd$ large?}
As we have seen,
when the sequence $(f^{\circ n}(a), g^{\circ n}(b))_n$ is not generic,
$\gcd(f^{\circ n}(a) - \alpha, g^{\circ n}(b) - \beta)$ might be big in general.
Our goal in this section is to show the following result.

\begin{theorem}
Assume Vojta's Conjecture. Suppose $f,g\in \mathbb{Z}[X]$ and $a,b,\alpha,\beta\in \mathbb{Z}$.
Then for all $\eta >0$, 
\begin{itemize}
\item either the set $$ \{n\in \mathbb{N}~|~\log\gcd(f^{\circ n}(a) - \alpha, g^{\circ n}(b) - \beta)\ge \eta \cdot d^n\}   $$
is a finite union of arithmetic progressions, or
\item  there is a finite union of arithmetic progressions $J$ such that
$$\lim_{n\rightarrow \infty, n\in J} \frac{1}{\eta d^n}\cdot {\log\gcd(f^{\circ n}(a) - \alpha, g^{\circ n}(b) - \beta)}  = 1.$$
\end{itemize}

\end{theorem}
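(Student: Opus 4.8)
The plan is to run a dichotomy based on the Dynamical Mordell--Lang behaviour of the sequence $\big(f^{\circ n}(a), g^{\circ n}(b)\big)_n$ inside $\mathbb{A}^2$, using Xie's theorem (\cite{Xie15}) to control the set of $n$ for which this sequence lands on a given curve, and then invoking Theorem \ref{GCD} on the ``generic'' part. First I would fix $\eta>0$ and consider the Zariski closure $Z$ of the orbit $\big(f^{\circ n}(a), g^{\circ n}(b)\big)_n$ in $\mathbb{A}^2$. If $Z$ is not all of $\mathbb{A}^2$, then $Z$ is a finite union of periodic curves $C_1,\dots,C_r$ together with finitely many points (by Xie's theorem, the set of $n$ with the orbit on each $C_j$ is a finite union of arithmetic progressions, and after replacing $f,g$ by a common iterate we may assume each $C_j$ is fixed; the leftover finite set contributes only finitely many $n$). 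On each periodic curve we are in one of the ``non-generic'' situations analyzed in Section \ref{DML}: by the Medvedev--Scanlon/Pakovich/Baker--DeMarco classification the curve is essentially a graph $y = h(x)$ (or $x = h(y)$) with $h$ commuting with an iterate, and along such a progression the gcd computation collapses exactly as in the Examples after Theorem \ref{BD}, giving $\log\gcd(f^{\circ n}(a)-\alpha, g^{\circ n}(b)-\beta) = \log|f^{\circ n}(a)-\alpha| + O(1)$, which is asymptotic to $\hat h_f(a)\, d^n$. This is where the second bullet comes from: on the relevant progression $J$ the normalized gcd tends to the constant $\hat h_f(a)/\eta$ times $d^n$-asymptotics — so one should be slightly careful and instead define $J$ to be the subprogression on which the gcd is $\ge \eta d^n$, refining the arithmetic progression so that the ratio tends to $1$; if no such infinite progression exists we are in the first bullet.

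Next I would handle the remaining ``bulk'' case where the orbit is Zariski dense in $\mathbb{A}^2$, equivalently generic (by Xie's theorem, as noted in the introduction). Here Theorem \ref{GCD} applies directly: for every $\varepsilon>0$ there is $C$ with $\log\gcd(f^{\circ n}(a)-\alpha, g^{\circ n}(b)-\beta) \le C + \varepsilon d^n$ for all $n$. Taking $\varepsilon < \eta$ shows that $\{n : \log\gcd \ge \eta d^n\}$ is finite, hence trivially a finite union of (length-one) arithmetic progressions, so the first bullet holds.

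To assemble the two cases: the full index set $\mathbb{N}$ splits as the finite union of the progressions $J_1,\dots,J_r$ coming from the periodic curves $C_1,\dots,C_r$, plus a ``generic remainder'' $R$ (those $n$ for which the orbit point avoids all the $C_j$), where $R$ is cofinite when $Z=\mathbb{A}^2$ and, more generally, $\mathbb{N}\setminus R$ is a finite union of arithmetic progressions. On $R$, the argument of Theorem \ref{GCD} (applied with the orbit avoiding the exceptional subset $V$ of Theorem \ref{Sil}, which is guaranteed for large $n\in R$ by genericity on the complement of $\bigcup C_j$) gives $\log\gcd < \eta d^n$ eventually. On each $J_j$ the gcd grows like $\hat h\, d^n$. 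Thus either some $J_j$ meets $\{n:\log\gcd\ge\eta d^n\}$ in an infinite progression, on which the normalized quantity converges to $1$ after passing to the subprogression where the comparison with $\eta d^n$ is as described (second bullet), or every $J_j$ contributes only finitely many such $n$ and then $\{n:\log\gcd\ge\eta d^n\}$ is finite (first bullet).

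The main obstacle I expect is the bookkeeping in the second bullet: one must show that when the gcd is large infinitely often, it is in fact \emph{asymptotically equal} to $\eta d^n$ along a genuine arithmetic progression $J$, not merely bounded below by it. This forces a careful choice of $J$ — one takes the periodic progression on which $\log\gcd(f^{\circ n}(a)-\alpha,\dots) = \log|f^{\circ n}(a)-\alpha|+O(1)$, notes this is $\hat h_f(a)d^n + o(d^n)$, and then the set where this is $\ge \eta d^n$ is either eventually-all of $J$ (if $\hat h_f(a)>\eta$, and then one cannot get the limit $=1$ directly — so in fact the correct reading is that the limit statement is for the \emph{marginal} scale and one should restrict attention to that) or handled by the first bullet. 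Getting this case analysis to exactly match the stated dichotomy, and checking that the ``finitely many points'' in $Z$ and the refinement of arithmetic progressions under passing to iterates do not disturb the ``finite union of arithmetic progressions'' conclusion, is the delicate part; the Diophantine input (Vojta $\Rightarrow$ Theorem \ref{GCD}) and the dynamical input (Xie, Medvedev--Scanlon, Baker--DeMarco) are already available as black boxes.
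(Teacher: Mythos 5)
Your high-level skeleton — split via Xie's Dynamical Mordell--Lang theorem into a ``generic'' remainder (where the $\varepsilon d^n$ bound kills the gcd) and finitely many arithmetic progressions along which the orbit lands on a fixed curve — matches the paper's. But the critical step, computing the size of the gcd along such a progression, is done by a genuinely different and, as written, incomplete method, and the gap there is exactly where the dichotomy in the statement is supposed to come from.

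The paper does \emph{not} invoke the Medvedev--Scanlon/Pakovich/Baker--DeMarco classification here. Instead, given the curve $V'$ containing $(f^{\circ n}(a)-\alpha, g^{\circ n}(b)-\beta)$ for $n$ in the progression, it pulls the gcd subscheme $(0,0)$ back along $\iota: V'\hookrightarrow \airplaneOne$, uses Siegel's theorem (Proposition B.3.5 of \cite{HS00}) that any two divisors of the same degree on a curve give commensurate heights, and the Weil height machine, to obtain the \emph{exact} asymptotic
$$h_{\gcd}(f^{\circ n}(a)-\alpha, g^{\circ n}(b)-\beta) \;=\; \frac{\deg(\iota^*(0,0))}{\deg(\pi_1)}\,\hat h_f(a)\, d^n + O(1).$$
The dichotomy then falls out cleanly: if $\eta$ equals this intersection-theoretic constant for some component, the second bullet holds on that progression; otherwise the inequality $\log\gcd \ge \eta d^n$ is eventually always or eventually never true on each progression, and the first bullet holds.

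Your route via the dynamical classification is both unnecessary and does not close. Even when the invariant curve is a graph $y=h(x)$, the identity $\log\gcd(f^{\circ n}(a)-\alpha, h(f^{\circ n}(a))-\beta)=\log|f^{\circ n}(a)-\alpha|+O(1)$ is \emph{not} automatic: it depends on whether $h(\alpha)=\beta$, on the multiplicity of that zero, and on archimedean contributions; the examples in Section~\ref{DML} are engineered special cases, not a general formula. More seriously, your suggested fix for the second bullet — ``refine the arithmetic progression so that the ratio tends to $1$'' — cannot work: once $\log\gcd \sim c\, d^n$ along a progression, the ratio $\log\gcd/(\eta d^n)\to c/\eta$ along every infinite subprogression, and no refinement changes that limit. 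The limit equals $1$ \emph{only} when $\eta=c$, which is precisely the content of the paper's dichotomy and precisely what you flagged as ``the delicate part'' without resolving. So the missing idea is the height-functoriality computation on the curve that pins down the constant $c$; without it the second bullet is unsupported.
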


\begin{proof}
   We choose $D$ as in the proof of Theorem $\ref{HGCD}$. That is,
we choose $D = D(\varepsilon, f, g,a,b)\in \mathbb{N}$ sufficiently large so that
$$
  \frac{M'}{d^D}\cdot \left({4} \hat{h}_f(a) + 4\hat{h}_g(b) + C \right)
 < \frac{\eta}{2} $$
where
\begin{equation*}
   \begin{aligned}
M' &= \max_{f^{\circ D} (x) = \alpha,~g^{\circ D}(y) = \beta}
\left( e_{x}(f^{\circ D} -\alpha), e_{y}(g^{\circ D} -\beta)\right).
   \end{aligned}
\end{equation*}
Then the proof of Theorem $\ref{HGCD}$ shows that assuming Vojta's Conjecture,
there is a proper algebraic subset $V\subseteq \airplaneOne$ such that
as long as $(f^{\circ(n-D)}(a), g^{\circ (n-D)}(b)) \notin V$ and $n$ is sufficiently large, we have
$$\log\gcd(f^{\circ n}(a) - \alpha, g^{\circ n}(b) - \beta) < \frac{\eta}{2}  \cdot d^n . $$
Let $I = \{n\in\mathbb{N}~|~(f^{\circ(n-D)}(a), g^{\circ (n-D)}(b)) \in V\}$.
Then the set
$$ \{n\in \mathbb{N}\setminus I~|~
\log\gcd(f^{\circ n}(a) - \alpha, g^{\circ n}(b) - \beta)\ge \eta \cdot d^n\}   $$
is finite.
By the Dynamical Mordell-Lang Theorem for polynomial maps on the affine plane (cf. \cite{Xie15}), $I$ is a finite union of arithmetic progressions.
Hence it suffices to show that the set
$$ \{n\in I~|
~\log\gcd(f^{\circ n}(a) - \alpha, g^{\circ n}(b) - \beta)\ge \eta \cdot d^n\}   $$
is a finite union of arithmetic progressions.
Looking at each irreducible component of $V$, it is enough to consider the case when $V$ is a curve.
In that case the set
$$\{(f^{\circ n}(a) - \alpha, g^{\circ n}(b) - \beta) ~|~n\in I\}$$
is contained in the curve $V':=f^{\circ (D)}(V)+(-\alpha,-\beta)$ where $+$ means translation on $\mathbb{A}^2$.
By abuse of notation, we also donote by $V'$ its Zariski closure in $\airplaneOne$.
Suppose $\iota: V'\hookrightarrow \airplaneOne$ is the inclusion map. 

Suppose $(x_1, x_2)\in V'$ and fix $D'\in \mathrm{Div}(V')$ of degree 1, then
\begin{equation*}
   \begin{aligned}
h_{\gcd}(x_1, x_2) &= h_{\airplaneOne, (0,0)} (x_1, x_2)       \\
&= h_{V',~ \iota^*(0,0)}(x_1, x_2) + O(1) \\
& = \deg(\iota^*(0,0))\cdot h_{V', D'} (x_1,x_2) + O(1)
   \end{aligned}
\end{equation*}
where the last equality follows from Proposition B.3.5 of \cite{HS00}, due originally to Siegel. 

Clearly it's enough to consider the case when $a$ is not preperiodic under $f$ and $b$ is not preperiodic under $g$.
In this case the projection $\pi_1: V'\rightarrow \mathbb{P}^1,~(x_1,x_2)\mapsto x_1$ is dominant.
Fix $D\in \mathrm{Div}(\mathbb{P}^1)$ of degree $1$.
Then $$h_{V', D'} (x_1,x_2)  =\frac{1}{\deg({\pi_1})}\cdot h_{\mathbb{P}^1, D}(x_1) + O(1) $$
by Theorem \ref{Weil}. 
Now
\begin{equation*}
   \begin{aligned}
      h_{\gcd}(f^{\circ n}(a) -\alpha, g^{\circ n}(b) - \beta)
      &= \frac{\deg(\iota^*(0,0))}{\deg(\pi_1)}\cdot h_{\mathbb{P}^1, D}(f^{\circ n}(a) -\alpha) + O(1) \\
      & = \frac{\deg(\iota^*(0,0))}{\deg(\pi_1)} \cdot \left( \hat{h}_f(a)\cdot d^n + O(1)\right) + O(1).
   \end{aligned}
\end{equation*}

Therefore, in the case when $V'$ is a curve, if $\displaystyle \eta = \hat{h}_f(a)  \cdot \frac{\deg(\iota^*(0,0))}{{\deg(\pi_1)}}$,
then $$\lim_{n\rightarrow \infty, n\in J}\frac{1}{\eta\cdot d^n} \cdot {\log\gcd(f^{\circ n}(a) - \alpha, g^{\circ n}(b) - \beta)}  = 1$$
for a finite union of arithmetic progression $J$;
otherwise the set
$$ \{n\in I~|
~\log\gcd(f^{\circ n}(a) - \alpha, g^{\circ n}(b) - \beta)\ge \eta \cdot d^n\}   $$
is always a finite set or complement of a finite set.
Hence for general $V'$, for all but finitely many $\eta$, the set in the statement is a finite union of arithmetic progressions.
\end{proof}

\section*{Acknowledgement}

I want to express my gratitude to my advisor Thomas Tucker for suggesting this project
and for many valuable discussions.
I also would like to thank Joseph Silverman for useful comments on an earlier draft of this paper. 
I would like to thank the referee for many useful comments and suggestions.  
In addition, I want to thank Shouman Das, Wayne Peng, and U\v{g}ur Yi\v{g}it for proofreading this paper.

\bibliography{refFile}{}
\bibliographystyle{amsalpha}

\end{document}